\theoremstyle{plain}
\newtheorem{thm}{Theorem}[section]
\newtheorem{lem}[thm]{Lemma}
\newtheorem{prop}[thm]{Proposition}
\newtheorem{coro}[thm]{Corollary}
\theoremstyle{definition}
\newtheorem{defn}[thm]{Definition}
\theoremstyle{remark}
\newcommand{\probability}[1]{	\mathbb{P}\left\{#1\right\}}
\newcommand{\expectation}[1]{	\mathbb{E}\left[#1\right]}
\newcommand{\variance}[1]{	\mathbf{Var}\left(#1\right)}
\newcommand{\N}{\mathbb{N}}
\newcommand{\R}{\mathbb{R}}
\title{\small\bf Heavy Traffic Limit for a Tandem Queue  with 
Identical Service Times}
\author{ {\small\sc H.\ Christian Gromoll, Bryce Terwilliger, Bert Zwart} \\
{\em\footnotesize University of Virginia and CWI} }
\begin{document}
\maketitle

\begin{abstract}
We consider a two-node tandem queueing network in which the upstream queue is
$M/G/1$ and each job reuses its upstream service requirement when moving
to the downstream queue. Both servers employ the first-in-first-out policy. We
investigate the amount of work in the second queue at certain embedded arrival
time points, namely when the upstream queue has just emptied. We focus on the
case of infinite-variance service times  and obtain a heavy traffic process
limit for the embedded Markov chain.  \end{abstract}

\noindent {\em AMS 2010 subject classification.} 60K25, 90B22.

\noindent {\em Keywords.} Tandem queue, infinite variance, Feller process,
process limit.

\section{Introduction}

One of the most remarkable queueing models in the literature is the tandem
network consisting of two FIFO (first-in-first-out) queues, where the first
queue is $M/G/1$ with arrival rate $\lambda$, and jobs reuse their original
service requirement when moving to the second queue.  This latter feature
introduces dependence between the second queue's arrival and service
processes, resulting in unusual behavior in the second queue.  In his PhD
thesis \cite{boxmathesis}, Boxma derived explicit expressions for a variety of
steady-state functionals for the second queue, in particular the steady-state
waiting time of a job; see also \cite{boxma1979tandem}.

Apart from being a rare example of a non-product-form tandem queueing network
for which an explicit analysis of the downstream queue is possible, this model
also shows unusual behavior in heavy traffic. In particular, a variety of
results have been derived in the case where the variance of service
requirements is finite; see  \cite{KarpelevichKreinin, kk96} and references
therein for an overview. These results imply that the amount of work in the
second queue is of smaller order than the amount of work in the first queue as
the system load $\rho$ (which is identical for both queues) increases to $1$.
For service times with bounded support, it is even shown in
\cite{boxma1978longest} that the expected value of the waiting time in the
second queue is finite for $\rho=1$.

The intuition behind these results is that the amount of work in the first
queue is driven by sums, but in the second queue is driven by maxima. More
precisely, letting $M_k$ be the largest service time in the $k$th
busy period of an $M/G/1$ queue, and letting $I_k$ be an exponential random
variable with rate $\lambda$ (which can be interpreted as the duration of the
$k$th idle period), the workload $R_k$ in the second queue at the end of the
$k$th busy period of the first queue satisfies the recursion
\begin{equation}
\label{Rchain}
R_{k+1} = \max \{ R_k - I_k, M_k\},\qquad k\geq 1;
\end{equation}
cf.\ \cite{boxma1979tandem}.

The goal of this paper is to analyze the Markov chain $R_k$ in detail, in the
regime where $\rho\rightarrow 1$ and in the situation where normalized sums
and normalized maxima are comparable, i.e.\ the case where service times have
a regularly varying tail of index in the range $(1,2)$. This is the range not
covered in \cite{KarpelevichKreinin, kk96}. We not only focus on the invariant
distribution of this Markov chain, but also on its behavior at the process level.

A key ingredient of our analysis is a limit theorem for the distribution tail
of $M_k$ in heavy traffic. It turns out that it is not possible to use the
tail behavior of $M_k$ for fixed $\rho$, as suggested in \cite{Boxma:2000}.
Rather, we prove a new lower bound for the tail of $M_k$ that is in the same
spirit of an upper bound derived in \cite{boxma1978longest}. A rescaled
version of the distribution of $M_k$ is then shown to converge to a limit that
is expressed through a certain function $\kappa(y)$, shown to be the unique
solution of a particular equation. Once this result for the limiting
distribution of $M_k$ is established, it is possible to utilize techniques
from \cite{ethier2009markov} to determine a Markov (in particular a Feller)
process that is the limit of an appropriately scaled and normalized version of
the Markov chain (\ref{Rchain}).

A model related to (\ref{Rchain}) is treated in \cite{BalleriniResnick}, which
investigates the extreme-value behavior of a Markov chain modelling the evolution of
world records in improving populations.  Though the models are different, one
could connect them by interpreting $R_k-I_k$ as a discounted world record. A
main difference is that in \cite{BalleriniResnick}, the random variables $M_k$
have a fixed distribution, while we need to consider how $M_k$ behaves in
heavy traffic, which represents a substantial part of our effort.

Though the Markov chain (\ref{Rchain}) is of intrinsic interest, it gives a
somewhat coarse description of the workload evolution in the second queue.  It
is also of interest to consider the evolution of the workload in the second
queue during busy periods of the first queue, to consider  joint convergence
of both queues in heavy traffic, and to drop the assumption that inter-arrival
times are exponential. These questions are beyond the scope and techniques of
this paper, and will be pursued elsewhere.

The paper is organized as follows. Section 2 provides a detailed model
description and presents our main results. Section 3 focuses on the behavior
of $M_k$ in heavy traffic. The process limit of (\ref{Rchain}) is investigated
in Section 4 (dealing with convergence of one-dimensional distributions)
and Section 5 (focusing on convergence of the entire process).

\subsection{Notation}
The following notation will be used throughout.  Let $\N=\{1,2,\ldots\}$ and
let $\R$ denote the real numbers.  Let $\R_{+}=[0,\infty)$.  For $a,b\in\R$,
  write $a\vee b$ for the maximum,  $a\wedge b$ for the minimum, $[a]^+=0\vee
  a$, $[a]^-=0\vee -a$ , and $[a]$ for the integer part of $a$. A sum over an
  empty set of indices is defined to be zero.

We say a nonnegative function $f$ is regularly varying with parameter $\nu$ if
\begin{equation*}
\lim_{x\to\infty} f(\lambda x)/f(x)=\lambda^\nu
\end{equation*}
for each $\lambda>0$, and we say it is regularly varying at zero if this holds
for $x\to 0$ instead. A random variable $V$ is regularly varying with
parameter $\nu$ if $x\mapsto \probability{V>x}$ is regularly varying with
parameter $-\nu$.  Note that if a nonnegative random variable $V$ is regularly
varying with parameter $\nu$ then $\expectation{|V|^\gamma}<\infty$ if and
only if $\gamma<\nu$.  For a distribution function $F(x)=\probability{V\leq
x}$ we write $\bar F(x)=1-F(x)$.  Let $\mathbb{D}=\mathbb{D}([0,\infty),\R)$
  be the space of real-valued, right-continuous functions on $[0,\infty)$ with
    finite left limits.  We endow $\mathbb D$ with the Skorohod $J_1$-topology
    which makes $\mathbb D$ a Polish space
    $\cite{billingsley1968convergence}$. If $X$ and $Y$ have the same
    distribution, we write $X\sim Y$. We write $X_n\Rightarrow X$  if  $X_n$
    converges in distribution to $X$.

\section{Model description and main results}
In this section we give a precise description of the tandem queue, specify our assumptions, and state our main result.
\subsection{Definition of the model}\label{sec:modelNor}
We formulate a model equivalent to the one in Boxma \cite{boxma1979tandem}.
The tandem queueing system consists of two queues Q1 and Q2 in series; both Q1
and Q2 are single-server queues employing the FIFO policy, with an unlimited
buffer. Jobs enter the tandem system at Q1. After completion of service at Q1
a job immediately enters Q2, and when service at Q2, which is the exact same
length as previously experienced in Q1, is completed it leaves the tandem
system.  We assume the system is empty at time zero.

Arrivals to Q1 are given by the {\it exogenous arrival process} $E(\cdot)$, a
Poisson process with parameter $\lambda$.  The {\it service times} of
these arriving jobs are given by an i.i.d.\ sequence $\{V_i, i \in \N\}$ with
distribution function $F$. That is, $V_i$ is the  amount of service required
from each server by the $i$th arrival.  We assume throughout that $1-F$
is regularly varying with parameter $-\nu$, $1<\nu<2$, so that
$\expectation{V_1}<\infty$ and $\variance{V_1}=\infty$.

Assume the traffic intensity $\rho=\lambda \expectation{V_1}\leq 1$ so that
the number of jobs in a typical busy period of Q1 is a proper random variable,
and when $\rho<1$ the expected number of jobs in a busy period is
$1/(1-\rho)$.  Let $M_i$ denote the service time of the largest job in the
$i$th busy period of Q1, and denote the distribution function of $M_i$ by $m$.
The distribution function $m$ does not depend on $i$ because the busy periods
correspond to independent and identically distributed cycles.  For $w>0$,
Boxma \cite{boxma1978longest} shows that $m(w)$ is the unique solution to
\begin{equation}\label{eq:Boxma}
m(w)=\int_0^we^{-\lambda t \bar m(w)}dF(t).
\end{equation}

Jobs departing Q1 immediately enter Q2.  Jobs only arrive to Q2 from Q1, so
the arrival process at Q2 is the departure process from Q1.  At Q2, the service
requirement of the $i$th job is $V_i$,  equal to its service requirement at
Q1, so no additional randomness is introduced in the second queue.

For $t\geq 0$, let
\begin{equation}\label{e.idlenessDef}
I(t)=\sup_{s\leq t}\left[\sum_{i=1}^{E(s)}V_i-s\right]^-.
\end{equation}
We interpret $I(t)$ as the cumulative amount of idle time experienced by the
first server up to time $t$.

Let $W_i(t)$ denote the (immediate) workload at time $t$ at Q$i$, $i=1,2$,
which is the total amount of time that the server must work in order to
satisfy the remaining service requirement of each job present at the queue at
time $t$, ignoring future arrivals.  These processes are defined in the usual
way: for $t\geq 0$
\begin{equation}
\nonumber
W_1(t)=\sum_{i=1}^{E(t)}V_i-t +I(t).
\end{equation}
The departure process from Q1 may be written $D(t)=\max\{k\ge0 :\sum_{i=1}^kV_i\le
t-I(t)\}.$ Then $W_2(t)$ is defined analogously to $W_1(t)$ using $D(t)$ in
place of $E(t)$ and the Q2 idleness process in place of $I(t)$; this latter
process is defined as in \eqref{e.idlenessDef} with $D(\cdot)$ in place of
$E(\cdot)$.  

This paper concerns the workload in the second queue at particular points in
time. Let $t_i$ be the arrival time to Q1 of the last job in the $i$th busy
period at Q1. Let $\tilde t_i$ be the time this job arrives to Q2. For
$i\in\N$, $$\tilde t_i =t_i +W_1(t_i).$$ Let $R_n$ be the workload in the
second queue at the time of the arrival to Q2 of the last job in the $n$th
busy period of Q1.  For $n\in \N$, $$R_n=W_2(\tilde t_n).$$ 

The random variable $R_n$ is the largest sojourn time in Q2 experienced by any
job in the $n$th busy period of Q1. The reason for this is that as long as Q1
is not idling, the next interarrival time to Q2 is identical to the next
service requirement, or amount of work to be added to Q2. If this service
requirement is less than the current Q2 workload, the workload will simply
decrease and then increase by the same amount, returning to its previous
level. If this service requirement is greater than the current workload, the
workload will decrease to zero and then jump to a level equal to the incoming
service requirement, higher than the previous level. 

In this way the Q2 workload performs a series of returns to a given level
until a job arrives that is larger than all previous jobs in the busy period,
causing the level to be set higher. Although the last job of a Q1 busy period
may not be the largest, it will by definition return the Q2 workload
to the highest level it attains for the busy period (or set it to a new
highest level if this job happens to be the largest in the busy period). Thus,
the Q2 workload $R_n$ at time $\tilde{t}_n$ is equal to the highest workload
and thus largest sojourn time encountered upon arrival by any job in the $n$th
Q1 busy period. 

The above description is only valid during busy periods of the first queue.
Idleness in the first queue complicates the dynamics substantially.
Nevertheless Boxma \cite{boxma1979tandem} Theorem 6.1 describes the steady
state distribution of $R_n$ when $\rho<1$:
\begin{equation}
\label{steadystate}
\lim_{n\to\infty}\probability{R_n\leq w}=m(w)\exp\left(-\lambda \int_{y=w}^\infty \bar m(y)\,dy\right).
\end{equation}
In this paper we establish a limit theorem for the whole chain
$R_n$ as the traffic intensity $\rho\to 1$. 

\subsection{Heavy traffic limit theorems}
Now we consider a sequence of tandem queueing systems indexed by $n\in\N$.
Each model in the sequence is defined on the same probability space $(\Omega,
\mathcal F, \mathbb P)$.  For each $n\in\N$, the arrival process $E^{(n)}$ is
a Poisson process with parameter $\lambda^{(n)}$, and the service times are
given by the same sequence $\{V_i\}_{i=1}^\infty$ of i.i.d.\ regularly varying
random variables with parameter $\nu\in(1,2)$. Assume that
$\expectation{V_1}>0$ and that $\{V_i\}_{i=1}^\infty$ is independent
of each $E^{(n)}$.  When necessary, we will apply a superscript $(n)$ to
indicate the $n$th model.

{\bf Asymptotic assumptions:} We make the following asymptotic assumptions
about our sequence of models as $n\to\infty$. We want the traffic intensity
$\rho^{(n)}$ increasing to $1$ with fixed service times $\{V_i\}$, so let
$\lambda=1/\expectation{V_1}$ and assume $\lambda^{(n)}\uparrow \lambda$ so that
$\rho^{(n)}=\lambda^{(n)}\expectation{V_1}\uparrow 1$.  Additionally, we
assume this occurs at an appropriate rate, namely 
\begin{equation}\label{e.RateToCritical}
  \left(\frac{1-\rho^{(n)}}{n \bar F(n)}\right)\to
\gamma\geq 0.
\end{equation}

We are now ready to state the first main result of our study. Let $T_\nu$ be a
Pareto$(\nu)$ random variable and let $\Gamma$ denote the gamma function.

\begin{thm}\label{thm:TailMScale}
Under the above assumptions, for $y>0$,
\begin{equation}\label{eq:nmToky}
\lim_{n\to\infty}n\bar m^{(n)}(ny)= \kappa(y)/y,
\end{equation}
where $\kappa=\kappa(y)$ satisfies the equation
\begin{equation} \label{eqinx}
\left(\frac{-1}{\Gamma(1-\nu)}\right)\expectation{e^{-\lambda \kappa T_\nu }}-\kappa \gamma y^{\nu-1} \left(\frac{-1}{\Gamma(1-\nu)}\right)=\left(\lambda \kappa \right)^\nu,
\end{equation}
$\kappa(y)$ is constant when $\gamma=0$, and is regularly varying of index $1-\nu$ when $\gamma > 0$.
\end{thm}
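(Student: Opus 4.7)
The plan is to pass to the limit $n\to\infty$ in Boxma's equation (\ref{eq:Boxma}). Splitting $\int_0^w=\int_0^\infty-\int_w^\infty$ recasts that identity as
\begin{equation*}
\bar m^{(n)}(w)=\bigl(1-\hat F(a_n)\bigr)+\expectation{e^{-a_n V_1}\mathbf{1}_{\{V_1>w\}}},\qquad a_n\define\lambda^{(n)}\bar m^{(n)}(w),
\end{equation*}
with $\hat F$ the Laplace transform of $V_1$. Set $w=ny$ and $\kappa_n(y)\define yn\bar m^{(n)}(ny)$; the goal is to show $\kappa_n(y)\to\kappa(y)$.

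Two classical ingredients drive the asymptotics. Regular variation of $\bar F$, $\bar F(x)=x^{-\nu}L(x)$ with $\nu\in(1,2)$, yields the Tauberian expansion
\begin{equation*}
1-\hat F(s)=s\expectation{V_1}+\Gamma(1-\nu)s^\nu L(1/s)(1+o(1)),\qquad s\downarrow 0,
\end{equation*}
with $\Gamma(1-\nu)<0$. Multiplying the recast identity by $n$, the linear Tauberian piece $na_n\expectation{V_1}=\rho^{(n)}\kappa_n(y)/y$ is absorbed into the left side to produce a factor $1-\rho^{(n)}$. Anticipating $\kappa_n(y)\to\kappa(y)$, one has $a_n\cdot nyu=\lambda^{(n)}\kappa_n(y)u\to\lambda\kappa u$, and weak convergence of $V_1/(ny)$ conditioned on $\{V_1>ny\}$ to a Pareto$(\nu)$ limit yields
\begin{equation*}
n\int_{ny}^\infty e^{-a_n t}dF(t)=n\int_1^\infty e^{-\lambda^{(n)}\kappa_n(y)u}dF(nyu)\sim \epsilon_n y^{-\nu}\expectation{e^{-\lambda\kappa T_\nu}},
\end{equation*}
where $\epsilon_n\define n\bar F(n)$; similarly $n\Gamma(1-\nu)a_n^\nu L(1/a_n)\sim \Gamma(1-\nu)(\lambda\kappa)^\nu y^{-\nu}\epsilon_n$. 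Dividing by $\epsilon_n$, using (\ref{e.RateToCritical}) to replace $(1-\rho^{(n)})/\epsilon_n$ by $\gamma$, and multiplying by $y^\nu$ yields precisely (\ref{eqinx}).

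To turn this heuristic into a proof I would proceed in three steps. (i) Establish a priori upper and lower bounds on $\kappa_n(y)$, uniform in $n$; the upper bound follows from Boxma's estimate in \cite{boxma1978longest}, but the matching lower bound is the new input flagged in the introduction, obtained by a direct sharp analysis of (\ref{eq:Boxma}). (ii) Along any convergent subsequence $\kappa_{n_k}(y)\to\tilde\kappa(y)$, the analysis above forces $\tilde\kappa(y)$ to solve (\ref{eqinx}). (iii) The map $\kappa\mapsto(\lambda\kappa)^\nu+\kappa\gamma y^{\nu-1}(-1/\Gamma(1-\nu))-(-1/\Gamma(1-\nu))\expectation{e^{-\lambda\kappa T_\nu}}$ is a sum of three terms strictly increasing in $\kappa$, equals $1/\Gamma(1-\nu)<0$ at $\kappa=0$, and tends to $+\infty$ as $\kappa\to\infty$; hence (\ref{eqinx}) has a unique positive root $\kappa(y)$, so the whole sequence $\kappa_n(y)$ converges to $\kappa(y)$. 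The qualitative features of $\kappa$ are immediate from (\ref{eqinx}): at $\gamma=0$ the $y$-dependence drops out and $\kappa$ is constant; when $\gamma>0$ one has $\kappa(y)\to 0$ as $y\to\infty$, whence $\expectation{e^{-\lambda\kappa(y)T_\nu}}\to 1$ and balancing $\gamma\kappa(y)y^{\nu-1}\approx 1$ gives $\kappa(y)\sim y^{1-\nu}/\gamma$, regularly varying of index $1-\nu$.

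The main obstacle is step (i), and in particular the sharp lower bound for $M_k$ in heavy traffic: the authors themselves advertise this as requiring a new argument in the spirit of the upper bound in \cite{boxma1978longest}. Once these bounds are in place, the Tauberian asymptotics, the Pareto weak-convergence statement, and the elementary uniqueness analysis for (\ref{eqinx}) are all routine.
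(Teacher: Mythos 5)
Your outline reproduces the paper's strategy almost exactly: recast Boxma's equation \eqref{eq:Boxma} as a Laplace-transform identity, apply the Abelian expansion of $1-\hat F(s)$ near $s=0$ (the paper's Lemma \ref{lem:RawEquationRM}, via \cite{RegVar} Theorem 8.1.6), identify the Pareto limit of the conditional expectation (Proposition \ref{conditionalExpectation}), pin down subsequential limits of $\kappa_n(y)=ny\bar m^{(n)}(ny)$ as roots of \eqref{eqinx}, and conclude by uniqueness of that root (Lemma \ref{lem:eqinKappa}); your sign bookkeeping and the monotonicity/endpoint analysis for uniqueness are both correct. Your derivation of the $\gamma>0$ regular variation is actually cleaner than the paper's: you get the asymptotic equivalence $\kappa(y)\sim y^{1-\nu}/\gamma$ directly from \eqref{eqinx}, whereas the paper inverts $y(\kappa)$ and invokes a regular-variation-of-inverses lemma (Lemma \ref{prop:InverseRegVar}).

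The genuine gap is your step (i), which you flag as the main obstacle but do not carry out, and without which steps (ii)--(iii) are vacuous (a subsequential limit $\tilde\kappa\in(0,\infty)$ must first exist, and $\tilde\kappa>0$ is needed for the Pareto weak-convergence step and for dividing by $a_n^\nu$). The paper's upper bound (Lemma \ref{limsupfinite}) does not come from Boxma's estimate as you suggest, but from the Abelian relation itself: if $\lambda^{(n)}ny\bar m^{(n)}(ny)\ge1$, Potter's theorem bounds the slowly varying ratio $l(1/(\lambda^{(n)}\bar m^{(n)}(ny)))/l(ny)$ below by $\tfrac12(\lambda^{(n)}ny\bar m^{(n)}(ny))^{-\nu/2}$, forcing $\limsup_n ny\bar m^{(n)}(ny)\le\max[2^{2/\nu}\expectation{V},1]$. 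The new lower bound (Lemma \ref{prop:liminfPositive}) is then obtained by inserting the quadratic bound $e^{-\lambda^{(n)}t}\le 1-\lambda^{(n)}t+C_Kt^2$, valid because the upper bound confines $t\bar m^{(n)}(ny)$ to a compact set, into \eqref{eq:Boxma}; rearranging gives
\begin{equation*}
\frac{1}{ny\bar m^{(n)}(ny)}\le\frac{1-\lambda^{(n)}\int_0^{ny}t\,dF(t)}{ny\bar F(ny)}+ny\bar m^{(n)}(ny)\,C_K\,\frac{\int_0^{ny}t^2\,dF(t)}{(ny)^2\bar F(ny)},
\end{equation*}
and Karamata's theorem for the truncated moments, together with the rate assumption \eqref{e.RateToCritical} to control $(1-\rho^{(n)})/(ny\bar F(ny))$, bounds the right side by $\gamma y^{\nu-1}+\lambda\nu/(\nu-1)+KC_K\nu/(2-\nu)$. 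Since this is the one argument the authors advertise as new, "direct sharp analysis of \eqref{eq:Boxma}" is not a proof; you would need to supply this (or an equivalent) mechanism for the proposal to stand.
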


 To give an idea of the proof, observe that Boxma's equation \eqref{eq:Boxma}
 for the distribution function $m$ of the largest job in busy period is nearly
 the Laplace transform of $V$ evaluated at $\lambda \bar m(w)$.  Since
 \eqref{eq:Boxma} holds for each model, we scale time and space by $n$ as in
 the law of large numbers, then apply an Abelian theorem to show that $n\bar
 m^{(n)}(n\cdot)$, the sequence rescaled distribution functions, converges.  We
 then find appropriate asymptotic bounds, establishing subsequential limits.
 These limits can all be characterized as the solution to an equation which is
 shown to be unique, implying convergence. A detailed proof of this result is
 provided in Section 3.\\

We now turn to our results pertaining to the behavior of (\ref{Rchain}) in
heavy traffic.  For each $n\in \N$, let $\{Y_n(k),k=0,1,2,\ldots\}$ be a
Markov chain in $[0,\infty)$ with transition function
  $\mu_n(x,B)=\probability{\max(x-I^{(n)}/n,M^{(n)}/n)\in B}$ where
  $I^{(n)}$ is an exponential random variable with parameter $\lambda^{(n)}$
  independent of the random variable $M^{(n)}$ which is the largest job in a
  busy period.  Observe that, using (\ref{Rchain}) (which is the recursion
  corresponding to proposition \ref{prop:form})
  $Y_n(k)\sim\frac{1}{n}R_k^{(n)}$.  Let $X_n(t)=Y_n([nt])$. Our next result
  describes convergence of the one-dimensional distributions of
  (\ref{Rchain}).

\begin{thm}
\label{thm:one-dimensional-convergence}
For each $t\ge 0$,  $X_n(t)\Rightarrow Z_t$ with
 $$\probability{Z_t\leq x}=\exp\left(-\lambda \int_{y=x}^{x+t/\lambda} \kappa(y)/y \, dy\right).$$
In particular, when $\gamma>0$,
\begin{equation}\label{eq.secondStatement}
\begin{split}
\lim_{t\to \infty}\probability{Z_t\leq x}&=\exp\left(-\lambda\int_x^{\infty}\kappa(y)/y dy \right)\\
&=\lim_{n\to\infty} m^{(n)}(nx)\exp\left(-\lambda^{(n)} \int_{t=x}^\infty n\bar m^{(n)}(nt) dt\right).
\end{split}
\end{equation}
\end{thm}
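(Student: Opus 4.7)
The plan is to unroll the Markov recursion for $Y_n$, reduce the one-dimensional distribution to an expectation over sums of i.i.d.\ exponentials, and then feed in Theorem~\ref{thm:TailMScale} via a Riemann sum argument.

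Iterating $Y_n(k+1)=\max(Y_n(k)-I_{k+1}^{(n)}/n,\,M_{k+1}^{(n)}/n)$ from $Y_n(0)=0$ and using $x\ge 0$, the event $\{Y_n(k)\le x\}$ equals $\bigcap_{j=1}^{k}\{M_j^{(n)}\le nx+I_{j+1}^{(n)}+\cdots+I_k^{(n)}\}$. Conditioning on the idle periods and using independence of the $M_j^{(n)}$'s from them and from each other, then reversing the i.i.d.\ sequence $(I_i^{(n)})$, yields
\[
\probability{Y_n([nt])\le x}=\expectation{\prod_{j=0}^{[nt]-1}m^{(n)}\bigl(nx+\tilde S_j^{(n)}\bigr)},
\]
where $\tilde S_j^{(n)}=I_1^{(n)}+\cdots+I_j^{(n)}$ with $\tilde S_0^{(n)}=0$. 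Set $A_n:=\sum_{j=0}^{[nt]-1}\log m^{(n)}(nx+\tilde S_j^{(n)})$ and expand $\log(1-u)=-u+O(u^2)$; since Theorem~\ref{thm:TailMScale} gives $\bar m^{(n)}(nx+\tilde S_j^{(n)})=O(1/n)$ uniformly in $j$, the quadratic remainder contributes $O(1/n)$ after summing $[nt]$ terms. For the linear part, a functional law of large numbers provides $\sup_{j\le[nt]}|\tilde S_j^{(n)}/n-j/(n\lambda^{(n)})|\Par 0$, and Theorem~\ref{thm:TailMScale}, combined with monotonicity of $\bar m^{(n)}$ and continuity of $\kappa(\cdot)/\cdot$, upgrades pointwise convergence to uniform convergence of $y\mapsto n\bar m^{(n)}(ny)$ on compact subsets of $(0,\infty)$. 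Replacing $\tilde S_j^{(n)}$ by $j/\lambda^{(n)}$ then turns the sum into a Riemann sum of mesh $1/(n\lambda^{(n)})$ over $[x,x+t/\lambda]$ which converges to $\lambda\int_x^{x+t/\lambda}\kappa(y)/y\,dy$. Consequently $A_n\Par -\lambda\int_x^{x+t/\lambda}\kappa(y)/y\,dy$, and since $A_n\le 0$ the bounded convergence theorem yields $\expectation{e^{A_n}}\to \probability{Z_t\le x}$.

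For the second statement with $\gamma>0$, $\kappa(y)/y$ is regularly varying of index $-\nu\in(-2,-1)$ and hence integrable at infinity, so the first equality follows by monotone convergence as $t\to\infty$. For the second equality, apply Boxma's steady-state formula~\eqref{steadystate} to the $n$th model with $w=nx$ and change variables $y=nt$ inside the integral to recover the right-hand expression; then let $n\to\infty$ using $m^{(n)}(nx)\to 1$ (since $\bar m^{(n)}(nx)\to 0$), pointwise convergence $n\bar m^{(n)}(nt)\to\kappa(t)/t$ from Theorem~\ref{thm:TailMScale}, and an appropriate uniform tail bound on $n\bar m^{(n)}(n\cdot)$ (obtained by exploiting Boxma's functional equation as in the proof of Theorem~\ref{thm:TailMScale}) to justify dominated convergence in the integral.

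The principal obstacle is the Riemann-sum step: Theorem~\ref{thm:TailMScale} is stated only pointwise, whereas the argument needs uniform convergence on $[x,x+t/\lambda]$ together with enough equicontinuity to absorb the random perturbation of the abscissae by $\tilde S_j^{(n)}-j/\lambda^{(n)}$. Monotonicity of $\bar m^{(n)}$ and continuity of the limit handle both issues, but extra care is needed near the left endpoint $y=x$, where $\kappa(y)/y$ is largest.
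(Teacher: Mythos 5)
Your proposal is correct and takes essentially the same route as the paper: unroll the recursion to a maximum of $M_k^{(n)}$ minus idle-period partial sums, reindex the i.i.d.\ idle periods, reduce to a product of $m^{(n)}$ evaluated along those partial sums, and convert the logarithm into a Riemann sum using Theorem~\ref{thm:TailMScale} upgraded to uniform convergence on compacts bounded away from zero (the paper's Corollary~\ref{coro:FudgeTail}). The only real divergence is how the random idle-period sums are removed: the paper replaces them by $(k-1)/\lambda$ inside the maximum \emph{before} taking probabilities, via Kolmogorov's maximal inequality and the bound $|\max_k(a_k+b_k)-\max_k(a_k)|\le\max_k|b_k|$, whereas you condition on the idle periods first and absorb the random perturbation of the abscissae afterwards via a law of large numbers plus the equicontinuity point you correctly flag; both devices work, and your treatment of the interchange in \eqref{eq.secondStatement} is at least as detailed as the paper's.
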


The second statement \eqref{eq.secondStatement} follows from the first
together with Theorem \eqref{thm:TailMScale}: when
$\rho<1$, we can rescale space by $n$ in the steady state distribution for
\eqref{Rchain} given by \eqref{steadystate}, which becomes
$m^{(n)}(nx)\exp\left\{-\lambda^{(n)} \int_{t=x}^\infty n\bar m^{(n)}(nt)
dt\right\}$.  Note that the limit of the steady state distributions agrees
with the limit of the one dimensional distributions, showing that the limits
$t\rightarrow\infty$ and $n\rightarrow\infty$ can be interchanged. The proof
of the first statement is more involved and described in Section 4.

 We conclude this section with the following theorem for the scaled process.
\begin{thm}\label{thm:Markov}
Suppose $\{X_n(0)\}$ has limiting distribution $\nu$.  There is a Markov process $X$ corresponding to a Feller semigroup $\{T(t)\}$ with initial distribution $\nu$ and sample paths in $D_\mathbb R[0,\infty)$, such that
$X_n\Rightarrow X$.
\end{thm}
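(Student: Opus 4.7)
The plan is to apply the semigroup convergence framework of \cite{ethier2009markov}, using Theorem \ref{thm:one-dimensional-convergence} as the input for the one-dimensional distributions and then upgrading to a Feller process by semigroup convergence. I write the one-step transition operator of the chain $Y_n$ as $T_n f(x) = \mathbb{E}[f(\max(x - I^{(n)}/n,\, M^{(n)}/n))]$, so that $X_n(\cdot)$ corresponds to the time-rescaled discrete semigroup $T_n^{[nt]}$.

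First I extend Theorem \ref{thm:one-dimensional-convergence} to an arbitrary (converging) starting point: if $x_n \to x \geq 0$, then $X_n(t)$ started from $x_n$ converges in distribution to some $Z_t^x$ whose CDF is an explicit modification of the formula in Theorem \ref{thm:one-dimensional-convergence} accounting for the starting state. The proof uses the explicit maximum representation
\[
Y_n(k) = \max\Bigl(x_n - \tfrac{1}{n}\sum_{j=1}^k I^{(n)}_j,\ \max_{1 \leq j \leq k}\bigl\{M^{(n)}_j/n - \tfrac{1}{n}\sum_{l=j+1}^k I^{(n)}_l\bigr\}\Bigr),
\]
the law-of-large-numbers convergence of $\tfrac{1}{n}\sum I^{(n)}_j$ to $t/\lambda$, and the tail asymptotics of $M^{(n)}/n$ from Theorem \ref{thm:TailMScale}; the same truncation and Abelian/Tauberian arguments used in Section~4 identify the limit.

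Second, I define the candidate limit semigroup $T(t)f(x) := \mathbb{E}[f(Z_t^x)]$ and verify that $\{T(t)\}$ is a Feller semigroup on $C_0([0,\infty))$. The semigroup property $T(t+s) = T(t)T(s)$ is inherited from the discrete semigroup property $T_n^{[n(t+s)]} = T_n^{[nt]}\,T_n^{[ns]}$ (up to an $O(1)$ shift in the exponent) by passing to the limit and invoking joint continuity of $(t,x) \mapsto T(t)f(x)$. Continuity of $x \mapsto T(t)f(x)$ and vanishing at infinity both come from the explicit CDF formula for $Z_t^x$: continuity from dominated convergence plus continuity of $\kappa$, and vanishing at infinity from the fact that started at large $x$, $Z_t^x \geq x - t/\lambda$ with probability $1$ while the total upward-jump intensity integrates $\kappa(y)/y$ and so decays with $x$. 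Strong continuity at $0$ follows from $Z_t^x \to x$ in probability as $t \to 0$. With these three properties in hand, I apply \cite[Theorem~1.6.5 / Theorem~4.2.6]{ethier2009markov}: the convergence $T_n^{[nt]}f \to T(t)f$ uniformly on compact subsets of $[0,\infty)$ for $f$ in a dense subset of $C_0([0,\infty))$, together with the Feller property of $T(t)$, yields $X_n \Rightarrow X$ in $D_{\mathbb R}[0,\infty)$ whenever the initial distributions $\{X_n(0)\}$ converge weakly to $\nu$.

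The main obstacle I anticipate is Step~2: verifying $T(t)\colon C_0([0,\infty)) \to C_0([0,\infty))$. The limiting upward-jump mechanism has a heavy-tailed intensity proportional to $\kappa(y)/y$ (regularly varying of index $-\nu$ when $\gamma > 0$), so one must carefully control the probability of large upward jumps when $x$ is large to ensure vanishing at infinity, and carefully estimate the sensitivity of the exponential CDF formula in $x$ to get continuity uniformly enough to pass through dominated convergence. Step~1 is also non-trivial: the Tauberian argument producing $\kappa$ in Section~3 must be shown to be stable under varying the starting condition, and uniformity in $x$ on compact sets will be needed to feed Step~3. Once these uniformity statements are in place, the rest of the argument follows the Ethier--Kurtz template essentially verbatim.
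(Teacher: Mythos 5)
Your overall architecture matches the paper's: both identify the limit transition mechanism as $x\mapsto\max(x-t/\lambda,Z_t)$ with $Z_t\sim\Phi(t,\cdot)$, both verify that the associated $\{T(t)\}$ is a Feller semigroup on $\hat C([0,\infty))$, and both conclude via \cite{ethier2009markov} Theorem 4.2.6. The genuine divergence is in how the convergence $T_n^{[nt]}f\to T(t)f$ is obtained. You propose to get it directly, by extending the one-dimensional convergence of Section 4 to sequences of starting points $x_n\to x$ (continuous convergence of the laws of $Y_n([nt])$ started at $x_n$). The paper instead computes the generator $\hat Af(x)=-f'(x)/\lambda+\int_x^\infty f'(y)\kappa(y)y^{-1}dy$ on the class $D$ of $C^1_c$ functions with $|f'(x)|\le ax$, proves $D$ is a core (by showing $T(t)D\subset D$ and invoking \cite{ethier2009markov} Proposition 1.3.3), and shows $A_nf\to Af$ on $D$, so that Theorem 1.6.5 delivers the semigroup convergence. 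Your route spares you the generator computation and the core argument, but shifts all the analytic burden onto uniformity statements about $ n\bar m^{(n)}(n\cdot)$ jointly in $x$ and the time index; the paper's route localizes that burden into a single dominated-convergence argument using Lemma \ref{limsupfinite}, Corollary \ref{coro:FudgeTail}, and the monotonicity from Lemma \ref{lem:IncreasingM}. Two places where your plan is thinner than it should be: (i) deriving the semigroup identity $T(s)T(t)=T(s+t)$ ``by passing to the limit in $T_n^{[ns]}T_n^{[nt]}$'' requires exactly the locally uniform semigroup convergence you are trying to establish, so it risks circularity; the paper avoids this by proving the algebraic identity $\max[Z_t-s/\lambda,Z_s]\sim Z_{s+t}$ directly from the explicit exponential formula (Lemma \ref{lem:ZtLinear}), and you should do the same. (ii) Theorem 4.2.6 needs $\|T_n^{[nt]}f-T(t)f\|\to0$ in the sup norm over all of $[0,\infty)$, not merely uniformly on compacts; the paper upgrades via the bound $T_n^{[nt]}f(x)\le\sup_{z\ge[x-y]^+}|f(z)|+\|f\|_\infty\,\mathbb{P}\{\frac1n\sum_{j\le[nt]}I_j^{(n)}>y\}$ from Lemma \ref{l.TnRepn}, and your plan should include an analogous tail estimate rather than stopping at compact sets. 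With those two repairs your approach goes through and is a legitimate alternative to the paper's generator-based argument.
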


The generator of $X$  can informally be written as
\begin{equation}
\hat Af(x)= \frac{-f'(x)}{\lambda} +\int_{y=x}^\infty f'(y)\kappa(y)y^{-1}\,dy.
\end{equation}
A formal proof of Theorem \ref{thm:Markov} is given in Section 5.

\section{The maximum service time in heavy traffic}

The purpose of this section is to derive the asymptotic behavior of the
distribution of $M_k$ in heavy traffic. The section begins with two
technical lemmas, of which the proofs can be skipped at first reading. After
that, we derive asymptotic lower and upper bounds, which are sharp up to a
constant, and provide an important stepping stone towards the derivation of the
limit.

\subsection{Some preliminary lemmas}

The following lemma is intuitive because the supremum over a larger set of
similar objects must also be larger. Recall that
$\lambda^{(n)}\uparrow\lambda$ and let $m^{(\infty)}$ be the distribution
function of the largest job in a busy period in a system where the arrival
process is Poisson and $\rho=1$.

\begin{lem}\label{lem:IncreasingM} As $n\to\infty$,
$$\bar m^{(n)}(x)\uparrow \bar m^{(\infty)}(x),\qquad x\ge0.$$
 \end{lem}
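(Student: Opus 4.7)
The plan is to lift the stochastic ordering to the sample-path level via the branching representation of the $M/G/1$ busy period. Recall that, starting from a first job of size $V_1 \sim F$, the number of new arrivals during its service is $\text{Poisson}(\lambda^{(n)} V_1)$, and each such arrival spawns an independent sub-busy-period with the same law as the whole. Iterating produces a (random) Galton--Watson-type tree $T^{(n)}$ whose vertices carry i.i.d.\ $F$-distributed service times, and $M^{(n)}$ is the maximum of those service times over the vertices of $T^{(n)}$.

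On a common probability space, construct the tree $T^{(\infty)}$ for the $\infty$-system directly from, at each vertex $v$ of service time $V$, a Poisson point process of potential offspring of rate $\lambda$ on $[0,V]$, with each such potential offspring carrying its own i.i.d.\ $F$-distributed service time and an independent $\text{Uniform}[0,1]$ mark $U$. For the $n$-th system, recursively retain a potential offspring iff $U \le \lambda^{(n)}/\lambda$. The Poisson thinning theorem guarantees that the retained tree $T^{(n)}$ has the correct law: each vertex of service time $V$ has $\text{Poisson}(\lambda^{(n)} V)$ retained children with i.i.d.\ $F$-distributed service times, so the induced subtree is distributed as the $n$-th branching tree. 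By construction $T^{(n)} \subseteq T^{(n+1)} \subseteq T^{(\infty)}$, so $M^{(n)} \le M^{(n+1)} \le M^{(\infty)}$ pointwise on $\Omega$; this immediately yields monotonicity of $\bar m^{(n)}(x)$ in $n$ together with the upper bound $\bar m^{(n)}(x) \le \bar m^{(\infty)}(x)$ for every $x \ge 0$.

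To see the monotone limit equals $\bar m^{(\infty)}(x)$, observe that $\lambda^{(n)}/\lambda \uparrow 1$, so for each potential offspring the event $\{U \le \lambda^{(n)}/\lambda\}$ holds for all sufficiently large $n$; hence $T^{(n)} \uparrow T^{(\infty)}$ along every sample path. Since $T^{(\infty)}$ is a Galton--Watson tree of unit offspring mean, it is a.s.\ finite, so the maximum of service times over $T^{(n)}$ increases to the maximum over $T^{(\infty)}$ almost surely, i.e.\ $M^{(n)} \uparrow M^{(\infty)}$ a.s. Monotone convergence of distribution functions then yields $\bar m^{(n)}(x) \uparrow \bar m^{(\infty)}(x)$ for every $x \ge 0$.

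The main point requiring care is the recursive thinning bookkeeping: one must verify that at every generation the offspring marks remain independent of everything previously revealed, so that the retained subtree really does have the distribution of the $n$-th branching tree. Once this standard marking/thinning argument is in place, the rest is monotone convergence. An alternative, purely analytic route is available should the branching construction feel heavy: from Boxma's equation \eqref{eq:Boxma}, $\bar m^{(n)}(x)$ is the unique fixed point in $[0,1]$ of the strictly concave increasing map $\alpha \mapsto 1-\int_0^x e^{-\lambda^{(n)} t \alpha}\,dF(t)$, whose values increase in $\lambda^{(n)}$; this gives monotonicity of $\bar m^{(n)}(x)$, and passing to the limit (by dominated convergence inside the integral) identifies the monotone limit as the unique fixed point for $\lambda$, i.e.\ $\bar m^{(\infty)}(x)$.
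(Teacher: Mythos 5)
Your proof is correct, but it takes a genuinely different route from the paper. You work at the sample-path level: you invoke the branching (Galton--Watson) representation of the $M/G/1$ busy period, realize all systems on one probability space via Poisson thinning with independent uniform marks, and obtain the pathwise nesting $T^{(n)}\subseteq T^{(n+1)}\subseteq T^{(\infty)}$, hence $M^{(n)}\uparrow M^{(\infty)}$ a.s.; continuity of measure from below then gives the lemma. (Note that the nesting uses the paper's standing assumption that $\lambda^{(n)}$ increases to $\lambda$, and your appeal to a.s.\ finiteness of the critical tree is justified since the offspring distribution has mean $\rho=1$ and is non-degenerate.) The paper instead argues entirely through Boxma's fixed-point equation \eqref{eq:Boxma}: monotonicity in $n$ comes from implicitly differentiating \eqref{eq:Boxma} in $\lambda$ and checking the sign of $dm/d\lambda$, and the identification of the limit comes from passing to the limit in \eqref{eq:Boxma} along a convergent subsequence by dominated convergence and invoking uniqueness of solutions. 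Your coupling buys a stronger conclusion (a.s.\ pathwise monotone convergence of $M^{(n)}$ itself, not just of the distribution functions) and avoids both the uniqueness argument and the sign computation, at the cost of setting up the branching representation and the recursive thinning bookkeeping you rightly flag; the paper's analytic route is shorter in context because \eqref{eq:Boxma} and its uniqueness are already the central tools of Section~3. Your closing analytic alternative is essentially the paper's argument in fixed-point language, though the step from pointwise ordering of the concave maps to ordering of their fixed points deserves a line of justification if you were to rely on it.
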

\begin{proof}
Apply \eqref{eq:Boxma} to a convergent subsequence of $\bar{m}^{(n)}(x)$ and
pass to the limit via dominated convergence.  Since \eqref{eq:Boxma} has
unique solutions, the limit must equal $\bar m^{(\infty)}(x)$. For
monotonicity, observe that differentiating \eqref{eq:Boxma} for fixed $x$ with
respect to $\lambda$ yields
\begin{equation*}
\frac{dm}{d\lambda} =\frac{-\int_0^x t(1-m)e^{-\lambda t
(1-m)}\,dF(t)}{1-\lambda \int_0^x te^{-\lambda t(1-m)} dF(t)},
\end{equation*}
which is negative because $\lambda \int_0^x te^{-\lambda t(1-m)}dF(t)\le \lambda
\int_0^x t\, dF(t)\leq\rho$ and implies $\frac{d\bar m}{d\lambda}$ is positive
for $\lambda$ less than the critical value.
\end{proof}
The next lemma uses an Abelian theorem. Recall that $\rho^{(n)}=\lambda^{(n)}
\expectation{V_1}$.  Our assumption that the $\{V_i\}$ are regularly varying
with paramter $1<\nu<2$ implies that we can write $1-F(t)=
\left(\frac{-1}{\Gamma(1-\nu)}\right)t^{-\nu}l(t)$ for a
slowly varying function $l$. 
\begin{lem}\label{lem:RawEquationRM}
 Fix $y>0$.  Then,
\begin{equation*}
\lim_{n\to\infty}\frac{\left(\frac{-1}{\Gamma(1-\nu)}\right)\expectation{e^{-\lambda^{(n)}\bar m^{(n)}(ny)V }\big|V>ny}-\frac{\bar m^{(n)}(ny)(1-\rho^{(n)})(ny)^\nu}{l(ny)}}
{\left(\lambda^{(n)}ny\bar m^{(n)}(ny)\right)^\nu \left(\frac{l\left(\frac{1}{\lambda^{(n)}\bar m^{(n)}(ny)}\right)}{l(ny)}\right)}=1.
\end{equation*}
\end{lem}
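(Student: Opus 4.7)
The plan is to rewrite Boxma's identity \eqref{eq:Boxma} in a form that matches a Karamata--Abelian expansion of the Laplace transform of $V_1$ evaluated at $s_n\define\lambda^{(n)}\bar m^{(n)}(ny)$, and then read off the claim.

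First I would split the full Laplace transform at $s_n$ into the contributions from $\{V_1\le ny\}$ and $\{V_1>ny\}$. Boxma's identity is exactly the former,
\begin{equation*}
\int_0^{ny}e^{-s_n t}\,dF(t)=m^{(n)}(ny)=1-\bar m^{(n)}(ny),
\end{equation*}
so
\begin{equation*}
\expectation{e^{-s_n V_1}}-1+\bar m^{(n)}(ny)=\bar F(ny)\,\expectation{e^{-s_n V_1}\big|V_1>ny}.
\end{equation*}
Next I would pair this with the standard Karamata--Abelian expansion
\begin{equation*}
\expectation{e^{-sV_1}}-1+s\expectation{V_1}=s^\nu l(1/s)(1+o(1)),\qquad s\downarrow 0,
\end{equation*}
which follows from $\bar F(t)=(-1/\Gamma(1-\nu))t^{-\nu}l(t)$ and the elementary identity $\int_0^\infty(1-e^{-u})u^{-\nu}\,du=-\Gamma(1-\nu)$. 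Lemma \ref{lem:IncreasingM} together with the properness of $m^{(\infty)}$ gives $\bar m^{(n)}(ny)\le \bar m^{(\infty)}(ny)\to 0$, so $s_n\downarrow 0$, and the expansion may be applied at $s=s_n$; note also that $s_n\expectation{V_1}=\rho^{(n)}\bar m^{(n)}(ny)$.

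Subtracting the two displays to eliminate $\expectation{e^{-s_n V_1}}$ produces
\begin{equation*}
\bar F(ny)\,\expectation{e^{-s_n V_1}\big|V_1>ny}-(1-\rho^{(n)})\bar m^{(n)}(ny)=s_n^\nu\,l(1/s_n)(1+o(1)).
\end{equation*}
Multiplying both sides by $(ny)^\nu/l(ny)$ and using the exact identity $\bar F(ny)(ny)^\nu/l(ny)=-1/\Gamma(1-\nu)$ turns the left-hand side into the numerator of the ratio in the lemma, while the right-hand side becomes $(\lambda^{(n)}ny\bar m^{(n)}(ny))^\nu\,l(1/s_n)/l(ny)\cdot(1+o(1))$, i.e.\ the denominator times $1+o(1)$, which proves the claim.

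The main obstacle is verifying that the $o(1)$ remainder in the Karamata--Abelian expansion is genuinely uniform along the specific sequence $s_n$: what is needed is that the error vanishes regardless of the rate at which $s_n\downarrow 0$, with no hidden coupling to $n$. Once the expansion is established as a pointwise statement in $s\downarrow 0$ via uniform convergence of slowly varying functions on compact sets together with dominated convergence in the integration-by-parts step, substituting the particular sequence $s_n$ requires no further refinement.
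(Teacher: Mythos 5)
Your proposal is correct and follows essentially the same route as the paper: split Boxma's identity off from the full Laplace transform at $s_n=\lambda^{(n)}\bar m^{(n)}(ny)$, invoke the Abelian expansion $\hat F(s)-1+s\expectation{V_1}\sim s^\nu l(1/s)$ (the paper cites \cite{RegVar} Theorem 8.1.6 rather than re-deriving it), justify $s_n\downarrow 0$ via Lemma \ref{lem:IncreasingM}, and normalize by $(ny)^\nu/l(ny)$. Your closing concern is correctly self-resolved: since $F$ does not depend on $n$, the expansion is a pure statement about $s\downarrow 0$ and applies along any null sequence $s_n$ with no hidden coupling.
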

\begin{proof}
Since arrivals are Poisson and $\rho^{(n)}\leq 1$ we have
$m^{(n)}(ny)=\int_{0}^{ny} e^{-\lambda^{(n)}t \bar m^{(n)}(ny)}dF(t)$ by
\eqref{eq:Boxma}. So
\begin{equation}\label{eq:BoxmaSum}
\bar m^{(n)}(ny)=1-\int_{0}^\infty e^{-\lambda^{(n)}t \bar m^{(n)}(ny)}dF(t)+\int_{ny}^\infty e^{-\lambda^{(n)}t \bar m^{(n)}(ny)}dF(t).
\end{equation}
For fixed $y>0$, write
\begin{align*}
\int_{ny}^\infty e^{-\lambda^{(n)}t \bar m^{(n)}(ny)}dF(t)&=\int_{0}^\infty e^{-\lambda^{(n)}t \bar m^{(n)}(ny)}1_{(ny,\infty)}(t)dF(t)\\
&=\expectation{e^{-\lambda^{(n)}V\bar m^{(n)}(ny)}1_{(ny,\infty)}(V)}\\
&=\probability{V>ny}\expectation{e^{-\lambda^{(n)}\bar m^{(n)}(ny)V}\Big|V>ny} \\
&=\left(\frac{-1}{\Gamma(1-\nu)}\right)(ny)^{-\nu}l(ny)\expectation{e^{-\lambda^{(n)}\bar m^{(n)}(ny)V}\Big|V>ny}. \\
\end{align*}
Substituting this into equation \eqref{eq:BoxmaSum},
\begin{multline*}
\bar m^{(n)}(ny)=1-\int_{0}^\infty e^{-\lambda^{(n)}t \bar m^{(n)}(ny)}dF(t)\\
+\left(\frac{-1}{\Gamma(1-\nu)}\right)(ny)^{-\nu}l(ny)\expectation{e^{-\lambda^{(n)}\bar m^{(n)}(ny)V}\Big|V>ny}.
\end{multline*}
Rearranging, and using $\lambda^{(n)}\expectation{V}=\rho^{(n)}$ we have
\begin{multline}
\int_{0}^\infty e^{-\lambda^{(n)}t \bar m^{(n)}(ny)}dF(t)-1 +\lambda^{(n)}\bar m^{(n)}(ny)\expectation{V}\\
=\left(\frac{-1}{\Gamma(1-\nu)}\right)(ny)^{-\nu}l(ny)\expectation{e^{-\lambda^{(n)}\bar m^{(n)}(ny)V}\Big|V>ny}-\bar m^{(n)}(ny)(1-\rho^{(n)}).
\end{multline}
Next, dividing by $\left(\lambda^{(n)}\bar m^{(n)}(ny)\right)^\nu l\left(\frac{1}{\lambda^{(n)}\bar m^{(n)}(ny)}\right)$ and multiplying the right hand side by $(ny)^\nu/(ny)^\nu$  we have
\begin{multline}\label{eq:Tauberian}
\frac{\int_{0}^\infty e^{-\lambda^{(n)}t \bar m^{(n)}(ny)}dF(t)-1 +\lambda^{(n)}\bar m^{(n)}(ny)\expectation{V}}{\left(\lambda^{(n)}\bar m^{(n)}(ny)\right)^\nu l\left(\frac{1}{\lambda^{(n)}\bar m^{(n)}(ny)}\right)}
\\=\frac{\left(\frac{-1}{\Gamma(1-\nu)}\right)l(ny)\expectation{e^{-\lambda^{(n)}\bar m^{(n)}(ny)V}\Big|V>ny}-\bar m^{(n)}(ny)(1-\rho^{(n)})(ny)^{\nu}}{\left(\lambda^{(n)}ny\bar m^{(n)}(ny)\right)^\nu l\left(\frac{1}{\lambda^{(n)}\bar m^{(n)}(ny)}\right)}.
\end{multline}

The limit as $n\to \infty$ on the left hand side is 1 by \cite{RegVar} Theorem
8.1.6.  To justify the use of Theorem 8.1.6 we note the left hand side of
equation \eqref{eq:Tauberian} is, in the notation used in Theorem 8.1.6,
$(\hat F(s)-1+s\expectation{V})/(s^\nu l(1/s))$.  So, $\bar F(x)=
-1/\Gamma(1-\nu)x^{-\nu}l(x)$ is equivalent to $(\hat
F(s)-1+s\expectation{V})/(s^\nu l(1/s))\to 1$ where $1<\nu<2$ and
$s=s(n)=\lambda^{(n)}\bar m^{(n)}(ny)$.  Since $\lambda^{(n)}\uparrow
\lambda<\infty$ and $\bar m^{(n)}(\cdot)$ is increasing in $n$ by Lemma
\ref{lem:IncreasingM}, $m^{(\infty)}$ is a proper probability distribution
yields $s\leq \lambda \bar m^{(\infty)}(ny)\downarrow 0$ as $n\to \infty$.
\end{proof}

\subsection{Asymptotic lower and upper bounds}

We are now ready to derive lower and upper bounds for $ny\bar m^{(n)}(ny)$ that are shown to converge in $(0,\infty)$ for each $y>0$.
\begin{lem}\label{limsupfinite}
For all $y\ge0$,
\begin{equation*}
\limsup_{n\to\infty} \,ny\bar m^{(n)}(ny)\leq\max\left[2^{2/\nu}\expectation{V},1\right].
\end{equation*}
\end{lem}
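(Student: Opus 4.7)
The plan is to derive the stronger pointwise estimate $w\bar m^{(n)}(w)\le 1/\lambda^{(n)}$, valid for all $w>0$ and all $n\in\N$, directly from Boxma's equation \eqref{eq:Boxma}. Specializing to $w=ny$ with $y>0$ and letting $n\to\infty$ then gives $\limsup_n ny\bar m^{(n)}(ny)\le 1/\lambda=\expectation{V_1}\le \max[2^{2/\nu}\expectation{V_1},1]$, and the case $y=0$ is trivial since $\bar m^{(n)}(0)=1$.

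First I would rewrite \eqref{eq:Boxma} in complementary form as
\[
\bar m^{(n)}(w) = \bar F(w) + \int_0^w\bigl(1 - e^{-\lambda^{(n)}\bar m^{(n)}(w)\,t}\bigr)\,dF(t),
\]
and apply the elementary inequality $1-e^{-x}\le x$ inside the integral to obtain
\[
\bar m^{(n)}(w) \le \bar F(w) + \lambda^{(n)}\bar m^{(n)}(w)\int_0^w t\,dF(t).
\]

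Next, using the identity $\int_0^w t\,dF(t) = \expectation{V_1} - \int_w^\infty t\,dF(t)$ together with $\lambda^{(n)}\expectation{V_1}=\rho^{(n)}\le 1$, I would rearrange to
\[
\bar m^{(n)}(w)\Bigl(1-\rho^{(n)}+\lambda^{(n)}\int_w^\infty t\,dF(t)\Bigr) \le \bar F(w).
\]
Dropping the nonnegative term $(1-\rho^{(n)})\bar m^{(n)}(w)$ and using the obvious lower bound $\int_w^\infty t\,dF(t)\ge w\bar F(w)$ then gives $\lambda^{(n)}w\bar m^{(n)}(w)\bar F(w)\le \bar F(w)$, i.e.\ $w\bar m^{(n)}(w)\le 1/\lambda^{(n)}$, as claimed.

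There is no substantial obstacle; the whole argument is a short chain of elementary inequalities. The point worth noting is that pairing $1-e^{-x}\le x$ with the crude tail bound $\int_w^\infty t\,dF(t)\ge w\bar F(w)$ already absorbs the difficulty of the critical regime $\gamma=0$: the simpler estimate $\bar m^{(n)}(w)(1-\rho^{(n)})\le \bar F(w)$ (obtained by instead discarding the $\lambda^{(n)}\int_w^\infty t\,dF(t)$ piece) blows up as $\rho^{(n)}\uparrow 1$, whereas keeping that piece yields a bound uniform in $n$.
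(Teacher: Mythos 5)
Your proof is correct, and it takes a genuinely different and more elementary route than the paper. The paper deduces the upper bound from Lemma \ref{lem:RawEquationRM} (the Abelian-theorem rearrangement of Boxma's equation) combined with Potter's bounds on the slowly varying function: assuming $\lambda^{(n)}ny\bar m^{(n)}(ny)\ge 1$, it bounds the ratio in that lemma by $2\left(\lambda^{(n)}ny\bar m^{(n)}(ny)\right)^{-\nu/2}$ and uses the fact that this ratio tends to $1$ to force $\limsup_n \lambda^{(n)}ny\bar m^{(n)}(ny)\le 2^{2/\nu}$, whence the stated constant. You instead work directly from \eqref{eq:Boxma} with the first-order bound $1-e^{-x}\le x$, obtaining the non-asymptotic, uniform-in-$w$ estimate $w\bar m^{(n)}(w)\le 1/\lambda^{(n)}$; this is exactly the first-order analogue of the second-order expansion $e^{-\lambda t}\le 1-\lambda t+C_Kt^2$ that the paper uses for the \emph{lower} bound in Lemma \ref{prop:liminfPositive}, and your observation about keeping the tail term $\lambda^{(n)}\int_w^\infty t\,dF(t)$ rather than the vanishing term $1-\rho^{(n)}$ is precisely the right point (it is what makes the bound survive the critical regime). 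Your argument yields the sharper constant $\expectation{V}=1/\lambda$, which indeed dominates the true limit since $\kappa(y)\le\frac{1}{\lambda}\left(\frac{-1}{\Gamma(1-\nu)}\right)^{1/\nu}$ and $-1/\Gamma(1-\nu)<1$ for $\nu\in(1,2)$; it also avoids any appeal to Potter's theorem and requires only that $\bar F(w)>0$ for all $w$, which is guaranteed by regular variation. The only thing the paper's route buys is that it reuses machinery (Lemma \ref{lem:RawEquationRM}) already needed for Theorem \ref{thm:TailMScale}, whereas your bound is self-contained; since the lower-bound Lemma \ref{prop:liminfPositive} only needs finiteness of $K=\sup_n ny\bar m^{(n)}(ny)$, your stronger bound slots into the rest of the argument with no circularity.
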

\begin{proof}
If $\lambda^{(n)}(ny)\bar{m}^{(n)}(ny)\geq 1$, we take $A=2$ and $\delta=\nu/2$ in Potter's Theorem \cite{RegVar} 1.5.6 so that for $n$ sufficiently large
\begin{equation*}
(1/2)\left(\lambda^{(n)}ny\bar m^{(n)}(ny)\right)^{-\nu/2}\leq \left(\frac{l\left(\frac{1}{\lambda^{(n)}\bar m^{(n)}(ny)}\right)}{l(ny)}\right).
\end{equation*}
The terms $\bar m^{(n)}(ny)(1-\rho^{(n)})(ny)^\nu$  and $\frac{-1}{\Gamma(1-\nu)}$ are
nonnegative and $l(ny)$ is eventually positive.  So for $n$ sufficiently large,
\begin{equation*}
\begin{split}
&\frac{\left(\frac{-1}{\Gamma(1-\nu)}\right)\expectation{e^{-\lambda^{(n)}\bar m^{(n)}(ny)V }\big|V>ny}-\frac{\bar m^{(n)}(ny)(1-\rho^{(n)})(ny)^\nu}{l(ny)}}
{\left(\lambda^{(n)}ny\bar m^{(n)}(ny)\right)^\nu \left(\frac{l\left(\frac{1}{\lambda^{(n)}\bar m^{(n)}(ny)}\right)}{l(ny)}\right)}\\
&\leq \frac{1}{\left(\lambda^{(n)}ny\bar m^{(n)}(ny)\right)^\nu \left(\frac{l\left(\frac{1}{\lambda^{(n)}\bar m^{(n)}(ny)}\right)}{l(ny)}\right)}\\
&\leq \frac{1}{\left(\lambda^{(n)}ny\bar m^{(n)}(ny)\right)^\nu \left((1/2)\left(\lambda^{(n)}ny\bar m^{(n)}(ny)\right)^{-\nu/2}\right)}\\
&=\frac{2}{\left(\lambda^{(n)}ny\bar m^{(n)}(ny)\right)^{\nu/2}}.
\end{split}
\end{equation*}
Lemma \ref{lem:RawEquationRM} gives
\begin{equation*}
\liminf_{n\to\infty}\frac{2}{\left(\lambda^{(n)}ny\bar m^{(n)}(ny)\right)^{\nu/2}}\geq 1,
\end{equation*}
when $\limsup_{n\to\infty}\lambda^{(n)}ny\bar{m}(ny)\geq 1$.
Since $\lambda^{(n)}\to1/\expectation{V}$, we have
\begin{equation*}
\limsup_{n\to\infty}ny\bar m^{(n)}(ny)\leq \max\left[2^{2/\nu}\expectation{V},1\right].
\end{equation*}
\end{proof}

The following inequality holds even in the case $\rho^{(n)}=1$ for each $n$.
\begin{lem}\label{prop:liminfPositive}
  For each compact set in $\mathcal{K}\subset\mathbb{R}_+$ there exists a constant $L > 0$ such
  that for all $y\in\mathcal{ K}$
\begin{equation*}
\liminf_{n\to\infty} ny\bar m^{(n)}(ny) \geq L.
\end{equation*}
\end{lem}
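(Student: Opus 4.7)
The plan is to reduce the uniform statement on $\mathcal{K}$ to a pointwise lower bound via monotonicity, and then establish that pointwise bound by contradiction using Lemma \ref{lem:RawEquationRM}.

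First I would reduce to a pointwise claim. Any compact $\mathcal{K}\subset(0,\infty)$ lies in some interval $[y_1,y_2]$ with $y_1>0$. Since $\bar m^{(n)}$ is nonincreasing, for $y\in\mathcal{K}$,
$$ny\,\bar m^{(n)}(ny)\;\geq\; ny_1\,\bar m^{(n)}(ny_2)\;=\;(y_1/y_2)\cdot ny_2\,\bar m^{(n)}(ny_2),$$
so it suffices to prove $\liminf_{n\to\infty} ny_0\,\bar m^{(n)}(ny_0)>0$ at the single value $y_0=y_2$.

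For this pointwise claim I would argue by contradiction: suppose along some subsequence $a_n:=\lambda^{(n)}ny_0\,\bar m^{(n)}(ny_0)\to 0$, and set $s_n:=\lambda^{(n)}\bar m^{(n)}(ny_0)=a_n/(ny_0)$. The denominator in Lemma \ref{lem:RawEquationRM} equals $a_n^\nu\cdot l(ny_0/a_n)/l(ny_0)$, and by Potter's theorem applied with any $\epsilon\in(0,\nu)$, the slowly varying ratio is at most $Ca_n^{-\epsilon}$, so the whole denominator is $O(a_n^{\nu-\epsilon})\to 0$. To contradict the fact that the ratio in Lemma \ref{lem:RawEquationRM} tends to $1$, I will show the numerator tends to a strictly positive constant.

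The first numerator term is $(-1/\Gamma(1-\nu))\expectation{e^{-s_n V}\mid V>ny_0}$. Since $s_nV\geq a_n$ on $\{V>ny_0\}$, the conditional expectation is at most $e^{-a_n}$; for fixed $K>1$, restricting to $\{V\leq Kny_0\}$ yields the lower bound $e^{-Ka_n}\probability{V\leq Kny_0\mid V>ny_0}$, which by regular variation of $\bar F$ and $a_n\to 0$ tends to $1-K^{-\nu}$. Letting $K\to\infty$, this expectation tends to $1$, so the first numerator term tends to $-1/\Gamma(1-\nu)>0$. The second numerator term equals $a_n(1-\rho^{(n)})(ny_0)^{\nu-1}/(\lambda^{(n)}l(ny_0))$, which by \eqref{e.RateToCritical}, $\bar F(n)=(-1/\Gamma(1-\nu))n^{-\nu}l(n)$, and $l(n)/l(ny_0)\to 1$ is $O(a_n)$ and vanishes. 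This completes the contradiction, and since $\lambda^{(n)}\to\lambda>0$ the pointwise conclusion follows. The main obstacle is the asymptotic control of the slowly varying ratio $l(1/s_n)/l(ny_0)$ in the denominator, since $1/s_n$ and $ny_0$ diverge at unknown relative rates; Potter's bound with $\epsilon<\nu$ is the essential tool.
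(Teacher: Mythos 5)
Your proof is correct, but it takes a genuinely different route from the paper's. You argue by contradiction: if $a_n=\lambda^{(n)}ny_0\bar m^{(n)}(ny_0)\to0$ along a subsequence, then the denominator in Lemma \ref{lem:RawEquationRM} is $O(a_n^{\nu-\epsilon})\to0$ by Potter's bound while the numerator tends to $-1/\Gamma(1-\nu)>0$, contradicting that the ratio tends to $1$; uniformity on $\mathcal K$ is then recovered from monotonicity of $\bar m^{(n)}$. This mirrors the structure of the upper bound in Lemma \ref{limsupfinite} and is clean and short; all the delicate points (the slowly varying ratio, the convergence of $\expectation{e^{-s_nV}\mid V>ny_0}$ to $1$ via truncation at $Kny_0$, and the $O(a_n)$ estimate for the $(1-\rho^{(n)})$ term) are handled correctly. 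The paper instead works directly from Boxma's equation \eqref{eq:Boxma}, inserting the elementary bound $e^{-u}\le 1-u+C_Ku^2$ and applying Karamata's theorem, which yields the explicit quantitative bound $\liminf_n ny\bar m^{(n)}(ny)\ge\bigl(\gamma y^{\nu-1}+\lambda\nu/(\nu-1)+KC_K\nu/(2-\nu)\bigr)^{-1}$. That explicit form buys two things your argument does not: the constant stays bounded away from zero uniformly as $y\downarrow0$ (your reduction produces a factor $y_1/y_2$ that degenerates if $\inf\mathcal K\to0$, so you must read $\mathcal K\subset(0,\infty)$), and the explicit lower bound on $\kappa(y)$ for $0\le y\le1$ is quoted later in the paper to show that $\Phi(t,\cdot)$ is a proper distribution function. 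Conversely, your approach avoids introducing the constant $C_K$ and reuses machinery already established for the upper bound.
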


\begin{proof}
  Fix $y\in\mathcal{K}$ and let $K=\sup_n ny\bar m^{(n)}(ny)$, which is finite
  by Lemma \ref{limsupfinite}.  Note that, for all $t \leq K$ there exists,
  under our assumptions, a constant $C_K$ independent of $n$ such that
  $e^{-\lambda^{(n)} t} \leq 1 - \lambda^{(n)} t + C_K t^2$ for each $n$ and
  each $t\in [0,K]$.  Inserting this inequality, into Boxma's equation
  (\ref{eq:Boxma}) we obtain
\begin{equation*}
\begin{split}
m^{(n)}(ny)&=\int_{0}^{ny} e^{-\lambda^{(n)} t \bar m^{(n)}(ny)}dF(t)\\
&\leq \int_{0}^{ny} \left(1-\lambda^{(n)} t \bar m^{(n)}(ny) +C_K t^2 (\bar m^{(n)} (ny))^2\right) dF(t)\\
&=F(ny) -\lambda^{(n)} \bar m^{(n)}(ny) \int_0^{ny} t dF(t) +C_K \left(\bar m^{(n)}(ny)\right)^2 \int_0^{ny} t^2 dF(t).
\end{split}
\end{equation*}
Consequently,
\begin{equation*}
\bar m^{(n)}(ny)\geq \bar F(ny) +\lambda^{(n)} \bar m^{(n)}(ny) \int_0^{ny} t dF(t) -C_K \left(\bar m^{(n)}(ny)\right)^2 \int_0^{ny} t^2 dF(t).
\end{equation*}
This implies
\begin{equation*}
\bar m^{(n)}(ny) \left( 1-  \lambda^{(n)} \int_0^{ny} t dF(t) + \bar m^{(n)}(ny) C_K \int_0^{ny} t^2 dF(t)\right) \geq \bar F(ny).
\end{equation*}
Since the second factor on the left side is positive, we see that 
\begin{equation}
  \nonumber
\bar m^{(n)}(ny) \geq \bar F(ny) \left(1-  \lambda^{(n)} \int_0^{ny} t dF(t) + \bar m^{(n)}(ny) C_K \int_0^{ny} t^2 dF(t)  \right)^{-1}.
\end{equation}
So we see that 
\begin{equation}\label{eq:boundable}
\frac{1}{ny\bar m^{(n)}(ny)} \leq \frac{1-  \lambda^{(n)} \int_0^{ny} t
dF(t)}{ny \bar F(ny)} +  ny\bar m^{(n)}(ny) C_K \frac{\int_0^{ny} t^2
dF(t)}{(ny)^2 \bar F(ny)}.
\end{equation}
To derive our desired result, we need to show that the limsup on the right
side of this equation is finite.  Since we already know from Lemma
\ref{limsupfinite} that $\limsup_{n\rightarrow\infty} ny\bar m^{(n)}(ny) \leq
K$, it suffices to investigate both fractions. 

Both will be dealt with using Karamata's theorem (Theorems 1.6.4 and 1.6.5 in
\cite{RegVar}). Set $w=ny$. An application of these results in our
setting yields
\begin{equation}
  \label{e.CGadded2}
\lim_{w\to\infty}\frac{\int_{0}^wt^2 dF(t)}{w^2\bar F(w)}=\frac{\nu}{2-\nu}\qquad\text{ and }\qquad \lim_{w\to\infty}\frac{\int_{w}^\infty t dF(t)}{w\bar F(w)}=\frac{\nu}{\nu - 1}.
\end{equation}
For the first fraction, write 
\begin{equation}\label{e.CGadded}
\frac{1-  \lambda^{(n)} \int_0^{w} t dF(t)}{w \bar F(w)} = \frac{1-\rho^{(n)} }{ w \bar F(w)} + \frac{\lambda^{(n)} \int_w^\infty tdF(t) }{w \bar F(w)}.
\end{equation}
The first term on the right side of \eqref{e.CGadded} converges to $\gamma
y^{\nu-1}$ due to our heavy-traffic assumption \eqref{e.RateToCritical} and
since $\bar{F}$ is regularly varying. The second term converges to $\lambda
\nu /(\nu-1)$ by the second equality in \eqref{e.CGadded2}.  Applying the
first equality in \eqref{e.CGadded2} to the second fraction in
\eqref{eq:boundable} yields a limiting upper bound of $\gamma y^{\nu-1}
+\frac{\lambda \nu}{\nu-1} + \frac{K C_K \nu}{2-\nu}.$  So
\begin{equation*}
\liminf_{n\to\infty} ny \bar m^{(n)}(ny)\geq \left(\gamma y^{\nu-1} +\frac{\lambda \nu}{\nu-1} + \frac{K C_K \nu}{2-\nu}\right)^{-1},
\end{equation*}
which is bounded below by some $L>0$ for all $y\in\mathcal{K}$.
\end{proof}

\subsection{Properties of $\kappa$}
In this section we show that $ny\bar m^{(n)}(ny)$ converges to $\kappa(y)$ and we describe several properties of $\kappa(y)$ for fixed $1<\nu<2$, $\lambda>0$, and $\gamma\geq0$.
We begin with several technical lemmas.

\begin{lem}\label{nulessthan2}
If $\lim_{n\to\infty}\bar m^{(n)}(ny)ny=\kappa$ for finite $\kappa$, and $n^{\nu-1}\left(\frac{1-\rho^{(n)}}{l(n)}\right)\to\gamma \left(\frac{-1}{\Gamma(1-\nu)}\right)$ we have
\begin{equation*}
\lim_{n\to\infty}\frac{\bar m^{(n)}(ny)(1-\rho^{(n)})(ny)^\nu}{l(ny)}=\kappa \gamma y^{\nu-1} \left(\frac{-1}{\Gamma(1-\nu)}\right).
\end{equation*}
\end{lem}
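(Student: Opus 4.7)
The plan is to reduce the limit to a product of four factors, each of which is controlled by one of the given hypotheses (or by a standard property of slowly varying functions).

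First I would algebraically rewrite the quantity of interest by separating out the pieces that match the two hypotheses. Specifically, I would write
\begin{equation*}
\frac{\bar m^{(n)}(ny)\,(1-\rho^{(n)})\,(ny)^\nu}{l(ny)}
= \bigl[\bar m^{(n)}(ny)\,ny\bigr]\cdot y^{\nu-1}\cdot \bigl[n^{\nu-1}\tfrac{1-\rho^{(n)}}{l(n)}\bigr]\cdot \frac{l(n)}{l(ny)}.
\end{equation*}
The idea is that the bracketed factors line up exactly with the two hypotheses of the lemma, leaving $y^{\nu-1}$ as a deterministic constant and $l(n)/l(ny)$ as a ratio of a slowly varying function.

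Next I would pass to the limit term by term. The first bracket tends to $\kappa$ by the first hypothesis, and the third bracket tends to $\gamma\bigl(\tfrac{-1}{\Gamma(1-\nu)}\bigr)$ by the second hypothesis. The ratio $l(n)/l(ny)$ tends to $1$ for each fixed $y>0$ by the definition of slow variation of $l$. Multiplying the limits gives
\begin{equation*}
\kappa\cdot y^{\nu-1}\cdot\gamma\Bigl(\tfrac{-1}{\Gamma(1-\nu)}\Bigr)\cdot 1,
\end{equation*}
which is the claimed limit.

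There is essentially no obstacle here: the statement is a compatibility check between the heavy-traffic scaling assumption \eqref{e.RateToCritical} (rephrased in terms of $l$ via $\bar F(n)=\bigl(\tfrac{-1}{\Gamma(1-\nu)}\bigr)n^{-\nu}l(n)$) and the assumed convergence of $ny\bar m^{(n)}(ny)$. The only minor point to verify is that the rewriting above is an identity, and that one may apply slow variation of $l$ pointwise in $y$; both are immediate. This lemma is intended as a bookkeeping step that will be fed into the characterization of $\kappa(y)$ through equation \eqref{eqinx} in the next stage of the argument.
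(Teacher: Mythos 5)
Your proof is correct and is essentially identical to the paper's: the same factorization into $\bigl(\bar m^{(n)}(ny)\,ny\bigr)\bigl(n^{\nu-1}(1-\rho^{(n)})/l(n)\bigr)\bigl(l(n)/l(ny)\bigr)y^{\nu-1}$, followed by taking limits factor by factor using the two hypotheses and slow variation of $l$.
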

\begin{proof}
\begin{equation*}
\begin{split}
\frac{\bar m^{(n)}(ny)(1-\rho^{(n)})(ny)^\nu}{l(ny)}&=\left(\bar m^{(n)}(ny)ny\right)\left(\frac{n^{\nu-1}(1-\rho^{(n)})}{l(n)}\right)\left(\frac{l(n)}{l(ny)}\right)\left(y^{\nu-1}\right)\\
&\to\kappa\gamma \left(\frac{-1}{\Gamma(1-\nu)} \right)y^{\nu-1}.
\end{split}
\end{equation*}
\end{proof}

We will need the following simple fact.

\begin{lem}\label{lem:slowRatio}
Let $f,g:\R_+\to \R_+$ with $f(x)\to\infty$ and $g(x)\to\infty$ as $x\to\infty$ and $f(x)/g(x)\to c>0$ as $x\to\infty$.  Let $L$ be slowly varying.  Then 
\begin{equation*}
\lim_{x\to\infty}\frac{L(f(x))}{L(g(x))}=1.
\end{equation*}
\end{lem}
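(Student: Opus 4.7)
The plan is to reduce the statement to the \emph{uniform convergence theorem} for slowly varying functions (e.g., Theorem 1.2.1 in \cite{RegVar}), which asserts that if $L$ is slowly varying, then $L(\lambda u)/L(u)\to 1$ as $u\to\infty$ \emph{uniformly} for $\lambda$ in any compact subset of $(0,\infty)$. The idea is to rewrite the ratio as
\begin{equation*}
\frac{L(f(x))}{L(g(x))}=\frac{L(\lambda(x)\,g(x))}{L(g(x))},\qquad \lambda(x):=\frac{f(x)}{g(x)}\to c>0,
\end{equation*}
and then exploit the fact that $\lambda(x)$ eventually lives in a compact subset of $(0,\infty)$ while the base point $g(x)\to\infty$ runs off to infinity.

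First I would fix an arbitrary $\varepsilon\in(0,c/2)$ and choose $x_0$ large enough that $\lambda(x)\in K:=[c-\varepsilon,c+\varepsilon]$ for all $x\ge x_0$; by the hypothesis $c>0$, the set $K$ is compact in $(0,\infty)$. Next, given any $\eta>0$, uniform convergence on $K$ produces a threshold $u_0$ such that $|L(\lambda u)/L(u)-1|<\eta$ for all $u\ge u_0$ and all $\lambda\in K$. Because $g(x)\to\infty$, one can choose $x_1\ge x_0$ with $g(x)\ge u_0$ for $x\ge x_1$; for such $x$ the ratio $L(\lambda(x)\,g(x))/L(g(x))$ is within $\eta$ of $1$. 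Since $\eta$ was arbitrary, this gives $L(f(x))/L(g(x))\to 1$.

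The only potential obstacle is invoking the correct form of uniform convergence, which requires $\lambda(x)$ to be kept away from both $0$ and $\infty$; the hypothesis $c>0$ (together with $c<\infty$, implicit in the statement) is precisely what guarantees this. No further Potter-type bounds are needed under the stated assumptions.
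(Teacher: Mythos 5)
Your proof is correct, but it takes a different route from the paper. You reduce the claim to the uniform convergence theorem for slowly varying functions (Theorem 1.2.1 in \cite{RegVar}), writing $f(x)=\lambda(x)g(x)$ with $\lambda(x)\to c$ and noting that $\lambda(x)$ eventually lies in a compact subset of $(0,\infty)$ while $g(x)\to\infty$; the paper instead invokes Karamata's representation theorem, writes $\ln L(f(x))-\ln L(g(x))$ as a difference of $\eta$-terms plus the signed integral $\int_{g(x)}^{f(x)}\epsilon(t)t^{-1}\,dt$, and bounds that integral by $\sup_{t\ge g(x)\wedge f(x)}\epsilon(t)\cdot|\ln(f(x)/g(x))|\to 0$. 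The two tools are essentially equivalent in strength (each is a standard consequence of the other), so neither argument is more general; yours is somewhat shorter and hides the bookkeeping inside the citation, while the paper's is self-contained modulo the representation theorem and makes the mechanism visible. Your closing remark correctly identifies the one point that matters: the hypothesis $0<c<\infty$ is exactly what keeps $\lambda(x)$ in a compact subset of $(0,\infty)$ so that uniform convergence applies, and it plays the same role in the paper's proof by keeping $\ln(f(x)/g(x))$ bounded. No gap.
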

\begin{proof}
By Karamata's representation theorem we have
\begin{equation*}
\frac{L(f(x))}{L(g(x))}=\frac{\exp\left( \eta(f(x)) + \int_{B}^{f(x)} \frac{\epsilon(t)}{t}\,dt\right)}{\exp\left( \eta(g(x)) + \int_{B}^{g(x)} \frac{\epsilon(t)}{t}\,dt\right)}.
\end{equation*}
Taking the natural log of each side, it suffices to show
\begin{equation*}
\eta(f(x))-\eta(g(x)) + \int_{B}^{f(x)} \frac{\epsilon(t)}{t}\,dt - \int_{B}^{g(x)} \frac{\epsilon(t)}{t}\,dt \to 0,
\end{equation*}
as $x$ goes to infinity.  Since $\eta$ is convergent and $f,g$ go to infinity, we need only show the signed integral
\begin{equation*}
 \int_{g(x)}^{f(x)} \frac{\epsilon(t)}{t}\,dt
 \end{equation*}
 converges to zero.  $\epsilon(t)$ is a bounded positive function, so integrating yields
 \begin{equation*}
\left| \int_{g(x)}^{f(x)} \frac{\epsilon(t)}{t}\,dt\right| \leq \sup_{t\in [g(x)\wedge f(x),g(x)\vee f(x)]}\epsilon(t) |\ln(f(x)/g(x))|.
 \end{equation*}
 Since $g(x)\wedge f(x)$ goes to infinity as $x\to\infty$, and $\epsilon(t)\to 0$ as $t\to\infty$, and $\ln(f(x)/g(x))\to \ln(c)$ as $x\to \infty$.  We have $\ln\left(\frac{L(f(x))}{L(g(x))}\right)\to 0$ as $x\to\infty$.
 \end{proof}
 
\begin{coro}\label{slowratio}
Fix $y>0$.  If $\lim_{n\to\infty}\lambda^{(n)}=\lambda$ and
$\lim_{n\to\infty}\bar m^{(n)}(ny)ny=\kappa$ for $0<\kappa<\infty$, we have
\begin{equation*}
\lim_{n\to\infty} \left(\frac{l\left(\frac{1}{\lambda^{(n)}\bar m^{(n)}(ny)}\right)}{l(ny)}\right)=1.
\end{equation*}
\end{coro}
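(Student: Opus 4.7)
The plan is to reduce the claim to a direct application of Lemma \ref{lem:slowRatio}, which is tailor-made for this situation. The ratio of interest is $L(f(n))/L(g(n))$ with $L = l$, $g(n) = ny$, and $f(n) = 1/(\lambda^{(n)} \bar m^{(n)}(ny))$. Since $l$ is slowly varying by hypothesis on $F$, all that remains is to verify the three hypotheses of the lemma: $f(n)\to\infty$, $g(n)\to\infty$, and $f(n)/g(n)$ converges to a strictly positive finite constant.

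The second is immediate, since $y>0$ is fixed. For the first, observe that the assumption $\lim_{n\to\infty} \bar m^{(n)}(ny)\, ny = \kappa\in(0,\infty)$ forces $\bar m^{(n)}(ny) \sim \kappa/(ny)$, hence $\bar m^{(n)}(ny)\to 0$, and since $\lambda^{(n)}\to\lambda\in(0,\infty)$, we get $\lambda^{(n)} \bar m^{(n)}(ny) \to 0$, so $f(n) = 1/(\lambda^{(n)}\bar m^{(n)}(ny))\to \infty$. For the ratio, I would write
\begin{equation*}
\frac{f(n)}{g(n)} = \frac{1}{\lambda^{(n)}\, \bar m^{(n)}(ny)\, ny} \longrightarrow \frac{1}{\lambda\kappa},
\end{equation*}
which is a positive finite constant since $0<\kappa<\infty$ and $\lambda>0$.

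With these three conditions in hand, Lemma \ref{lem:slowRatio} applies (strictly speaking it is stated for real-variable functions, but the proof only uses that the arguments diverge and their ratio converges to a positive constant, which is exactly what we have along the integer sequence; equivalently, one can interpolate $f$ and $g$ to $\R_+$ in any monotone way without affecting the limit). The conclusion $l(f(n))/l(g(n))\to 1$ is precisely the claim of the corollary.

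There is essentially no obstacle here; the content of the argument lives in Lemma \ref{lem:slowRatio}. The only mild care is making sure the hypotheses $0<\kappa<\infty$ and $\lambda<\infty$ are both used: the former to keep $f(n)/g(n)$ bounded away from zero, and the latter to keep it bounded above. If either failed, the ratio $f(n)/g(n)$ could drift to $0$ or $\infty$, and the slowly varying function could pick up a nontrivial limit along the way.
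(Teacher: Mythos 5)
Your proposal is correct and matches the paper's intent exactly: the corollary is stated without proof precisely because it is the direct application of Lemma \ref{lem:slowRatio} with $f(n)=1/(\lambda^{(n)}\bar m^{(n)}(ny))$ and $g(n)=ny$ that you carry out, and your verification of the three hypotheses (including the observation that $\bar m^{(n)}(ny)\to 0$ follows from $ny\bar m^{(n)}(ny)\to\kappa<\infty$) is the routine check the authors left implicit.
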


Recall that $T_\nu$ is a Pareto $\nu$ random variable if
$$\probability{T_\nu>x}=\left\{\begin{array}{ll} x^{-\nu}, & \text{ if } x \geq
  1, \\ 1, & \text{ if } x<1.\end{array}\right.$$
Clearly $T_\nu$ is regularly varying with parameter $\nu$.

\begin{prop}\label{conditionalExpectation} Fix $y>0$. Then if
  $\lambda^{(n)}\to \lambda$, $ny\bar m^{(n)}(ny)\to \kappa>0$, and $V$ is
  regularly varying with parameter $\nu$, we have
\begin{equation*}
\lim_{n\to\infty}\expectation{\left.e^{-\lambda^{(n)}\bar m^{(n)}(ny)V}\right|V>ny}= \expectation{e^{-\lambda \kappa T_{\nu}}}.
\end{equation*}
\end{prop}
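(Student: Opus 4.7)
The plan is to change variables to the conditioned ratio $U_n := V/(ny)$ given $V > ny$, show that $U_n$ converges weakly to $T_\nu$, and then combine this with the convergence of the exponent coefficient via a uniform continuity estimate. This reduces the question to a standard weak-convergence argument, since the joint limit decouples cleanly.

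First, I would observe that for any $x \geq 1$, regular variation of $\bar F$ with parameter $-\nu$ gives
\begin{equation*}
\probability{U_n > x \mid V > ny} = \frac{\bar F(nyx)}{\bar F(ny)} \longrightarrow x^{-\nu},
\end{equation*}
and trivially $\probability{U_n > x \mid V > ny} = 1$ for $x < 1$. So the conditional law of $U_n$ converges weakly to the law of $T_\nu$.

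Next, rewrite
\begin{equation*}
\expectation{e^{-\lambda^{(n)}\bar m^{(n)}(ny)V}\mid V>ny} = \expectation{e^{-c_n U_n}\mid V>ny},
\end{equation*}
where $c_n := \lambda^{(n)}\bar m^{(n)}(ny)\cdot ny \to \lambda\kappa =: c$ by hypothesis. Since $|e^{-c_n u} - e^{-cu}| \leq |c_n - c|\,u\,e^{-(c_n \wedge c)u}$, and $c_n \wedge c$ stays bounded away from zero for large $n$, the supremum of $u\,e^{-(c_n\wedge c)u}$ over $u \geq 0$ is uniformly bounded. Hence
\begin{equation*}
\sup_{u \geq 0} \big|e^{-c_n u} - e^{-c u}\big| \longrightarrow 0,
\end{equation*}
so it suffices to prove $\expectation{e^{-c U_n}\mid V>ny} \to \expectation{e^{-c T_\nu}}$. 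But $u \mapsto e^{-c u}$ is bounded and continuous on $[0,\infty)$, and weak convergence of $U_n$ to $T_\nu$ gives exactly this via the portmanteau theorem.

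I do not expect a major obstacle here; the only subtle point is dealing with two simultaneous limits (the coefficient $c_n$ and the variable $U_n$), which is handled by the uniform bound above rather than by direct continuous mapping. One should also note that $c = \lambda \kappa > 0$ is essential for making the exponential bounded away from the constant function in a useful way; since the hypothesis gives $\kappa > 0$, this is not an issue.
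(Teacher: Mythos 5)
Your proof is correct, and it reaches the same destination by a noticeably cleaner normalization than the paper's. The paper substitutes $u=\bar m^{(n)}(ny)\,t$, so the rescaled conditional law lives on $(ny\bar m^{(n)}(ny),\infty)$ and converges to a Pareto measure scaled by $\kappa$; establishing that weak convergence, and replacing the moving boundary point $ny\bar m^{(n)}(ny)$ by $\kappa$, requires a separate lemma on ratios of slowly varying functions (the paper's Lemma \ref{lem:slowRatio}) to compare $\bar F$ at $\kappa/\bar m^{(n)}(ny)$ with $\bar F$ at $ny$. By instead normalizing by $ny$ and writing the exponent as $c_nU_n$ with $c_n=\lambda^{(n)}\,ny\bar m^{(n)}(ny)\to\lambda\kappa$, you make the weak convergence $U_n\Rightarrow T_\nu$ an immediate consequence of regular variation (no auxiliary lemma, no moving domain), and you absorb all of the $n$-dependence of the exponent into a single scalar $c_n$, which you then dispose of with one uniform estimate $\sup_{u\ge0}|e^{-c_nu}-e^{-cu}|\to0$; the paper handles the analogous issues ($\lambda^{(n)}$ versus $\lambda$, the indicator mismatch, the slowly varying ratio) in three separate steps. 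Your observation that $\kappa>0$ is what keeps $\sup_u u\,e^{-(c_n\wedge c)u}$ bounded is the right place to flag the hypothesis; the paper uses $\kappa>0$ for the same purpose, just less explicitly. The only cosmetic point: $U_n\ge1$ almost surely under the conditioning, so the relevant test function is $e^{-cu}$ on $[1,\infty)$, which is of course still bounded and continuous, so nothing is lost.
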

\begin{proof}
Observe that
\begin{equation*}
\expectation{\left.e^{-\lambda^{(n)}\bar m^{(n)}(ny)V}\right|V>ny}=\int_{0}^\infty e^{-\lambda^{(n)}\bar m^{(n)}(ny)t} 1_{(ny,\infty)}(t)\, \frac{dF(t)}{1-F(ny)}.
\end{equation*}
Now substitute $u=\bar m^{(n)}(ny)t$ to obtain,
\begin{equation*}
\expectation{\left.e^{-\lambda^{(n)}\bar m^{(n)}(ny)V}\right|V>ny}=\int_{0}^\infty e^{-\lambda^{(n)} u} 1_{(ny\bar m^{(n)}(ny),\infty)}(u)\, \frac{F(du/\bar m^{(n)}(ny))}{1-F(ny)}.
\end{equation*}
thus, we have
\begin{equation*}
\lim_{n\to\infty}\expectation{\left.e^{-\lambda^{(n)}\bar m^{(n)}(ny)V}\right|V>ny}=\lim_{n\to\infty}\int_{0}^\infty e^{-\lambda^{(n)} u} 1_{(\kappa,\infty)}(u)\, \frac{F(du/\bar m^{(n)}(ny))}{1-F(ny)},
\end{equation*}
because $e^{-\lambda^{(n)} u}\leq 1$ and
\begin{equation*}
\begin{split}
\lim_{n\to\infty}\int_{0}^\infty  &\left(1_{(\kappa,\infty)}(u)-1_{(ny\bar m^{(n)}(ny),\infty)}(u)\right)\, \frac{F(\frac{du}{\bar m^{(n)}(ny)})}{1-F(ny)}\\
&=\lim_{n\to\infty}\left( \frac{1-F\left(\frac{\kappa}{\bar m^{(n)}(ny)}\right)}{1-F(ny)} -\frac{1-F\left(\frac{ny\bar m^{(n)}(ny)}{\bar m^{(n)}(ny)}\right)}{1-F(ny)}     \right)\\
&=\lim_{n\to\infty}\left( \frac{1-F\left(\frac{\kappa ny}{ny\bar m^{(n)}(ny)}\right)}{1-F(ny)} -\frac{1-F\left(ny\right)}{1-F(ny)}     \right)\\
&=0,
\end{split}
\end{equation*}
by Lemma \ref{lem:slowRatio} since $1-F$ is regularly varying with parameter $-\nu$.

The measure $ \frac{F(du/\bar m^{(n)}(ny))}{1-F(ny)}$ converges weakly to the
measure $\left(du/\kappa\right)^{-\nu}$ as $n\to\infty$, since for all
$0\le a<b$, as in the previous display, 
\begin{multline*}
\lim_{n\to\infty}\int 1_{(a,b]}\frac{F(du/\bar m^{(n)}(ny))}{1-F(ny)}\\=\lim_{n\to\infty}\frac{F(b/\bar m^{(n)}(ny))}{1-F(ny)}-\frac{F(a/\bar m^{(n)}(ny))}{1-F(ny)}=\left(\frac{a}{\kappa}\right)^{-\nu}-\left(\frac{b}{\kappa}\right)^{-\nu}.
\end{multline*}

For all $\epsilon>0$ there exists $N$ such that $n>N$ implies
$|e^{-\lambda^{(n)} u}-e^{-\lambda u}|<\epsilon$, uniformly in $u$. Combining
with the above weak convergence, 
\[\lim_{n\to\infty} \int_0^\infty \left|e^{-\lambda^{(n)} u}-e^{-\lambda
u}\right|1_{(\kappa,\infty)}(u)\, \frac{F(du/\bar m^{(n)}(ny))}{1-F(ny)}=0.\]
So, we have
\begin{equation*}
\lim_{n\to\infty}\expectation{\left.e^{-\lambda^{(n)}\bar m^{(n)}(ny)V}\right|V>ny}=\lim_{n\to\infty}\int_{0}^\infty e^{-\lambda u} 1_{(\kappa,\infty)}(u)\, \frac{F(du/\bar m^{(n)}(ny))}{1-F(ny)}.
\end{equation*}
Using weak convergence again and the fact that the limit measure has no atoms gives
\begin{equation*}
\lim_{n\to\infty}\expectation{\left.e^{-\lambda^{(n)}\bar m^{(n)}(ny)V}\right|V>ny}= \kappa^\nu\int_\kappa^\infty e^{-\lambda t}\left(dt\right)^{-\nu}.
\end{equation*}
Finally substitute $t=x\kappa$ to get
\begin{equation*}
\kappa^\nu\int_\kappa^\infty e^{-\lambda t}\left(dt\right)^{-\nu}=\int_1^\infty e^{-\lambda \kappa x}\left(dx\right)^{-\nu}=\expectation{e^{-\lambda \kappa T_{\nu}}}.
\end{equation*}
\end{proof}

The equation that describes $\kappa(y)$ is \eqref{eqinx} as shown in the following Lemma.
\begin{lem}\label{lem:eqinKappa}
Let $T_\nu$ be Pareto $\nu$, $1<\nu<2$, $\gamma\geq0$ and $\lambda>0$.  The equation in the variable $\kappa>0$
\begin{equation}
\nonumber
\left(\frac{-1}{\Gamma(1-\nu)}\right)\expectation{e^{-\lambda \kappa T_\nu }}-\kappa \gamma y^{\nu-1} \left(\frac{-1}{\Gamma(1-\nu)}\right)=\left(\lambda \kappa \right)^\nu
\end{equation}
has exactly one solution for all $y>0$.
\end{lem}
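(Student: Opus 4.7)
\medskip
\noindent\textbf{Proof plan.} Set $c = -1/\Gamma(1-\nu)$, which is strictly positive since $1<\nu<2$ makes $\Gamma(1-\nu)$ negative. Define
\begin{equation*}
g(\kappa) \define c\,\expectation{e^{-\lambda \kappa T_\nu}} - c\gamma y^{\nu-1}\kappa - (\lambda\kappa)^\nu,
\end{equation*}
so that equation \eqref{eqinx} is simply $g(\kappa)=0$. The strategy is the standard one: verify existence of a positive root via the intermediate value theorem, then uniqueness by checking that $g$ is strictly monotone on $(0,\infty)$.

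For existence, first evaluate at the boundary. At $\kappa=0$ we have $g(0)=c>0$. As $\kappa\to\infty$, dominated convergence gives $\expectation{e^{-\lambda\kappa T_\nu}}\to 0$ (since $T_\nu\ge 1$ a.s.), while the term $-(\lambda\kappa)^\nu\to-\infty$ and $-c\gamma y^{\nu-1}\kappa\le 0$. Hence $g(\kappa)\to -\infty$. Continuity of $g$ (clear from dominated convergence applied inside the expectation) and the intermediate value theorem yield at least one solution $\kappa(y)\in(0,\infty)$.

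For uniqueness, differentiate under the integral sign, which is justified since the density of $T_\nu$ is $\nu x^{-\nu-1}\mathbf 1_{x\ge 1}$ and the integrand $x e^{-\lambda\kappa x}$ is dominated on $[1,\infty)$ for any $\kappa>0$. This gives
\begin{equation*}
g'(\kappa) = -c\lambda\,\expectation{T_\nu e^{-\lambda\kappa T_\nu}} - c\gamma y^{\nu-1} - \nu\lambda^\nu \kappa^{\nu-1}.
\end{equation*}
All three terms are non-positive and the last is strictly negative for $\kappa>0$, so $g'(\kappa)<0$ on $(0,\infty)$. Therefore $g$ is strictly decreasing, and the root produced above is unique.

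There is no real obstacle: the claim reduces to elementary monotonicity once one writes \eqref{eqinx} in the form $g(\kappa)=0$ and notices that the sign of $c$, the sign of the $\gamma$-term, and the sign of the polynomial piece all align. The only minor care needed is to confirm $\expectation{T_\nu e^{-\lambda\kappa T_\nu}}<\infty$ for $\kappa>0$, which follows at once from the exponential factor dominating the Pareto tail.
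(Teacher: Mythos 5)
Your proof is correct and follows essentially the same route as the paper's: both arguments rest on continuity, strict monotonicity in $\kappa$, and the boundary values $g(0)=-1/\Gamma(1-\nu)>0$ and $g(\kappa)\to-\infty$. The only cosmetic difference is that you move everything to one side and differentiate explicitly, while the paper observes separately that the left side is strictly decreasing and the right side strictly increasing.
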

\begin{proof}
The left hand side is a strictly decreasing continuous function in $\kappa$
and the right hand side is a strictly increasing continuous function in
$\kappa$.  When $\kappa=0$ the left hand side is $\frac{-1}{\Gamma(1-\nu)}>0$
and the right hand side is 0.  As $\kappa\to \infty$, the left hand side goes
to $0$ if $\gamma=0$ and $-\infty$ if $\gamma>0$; the right hand side goes to
infinity.  Thus  $\eqref{eqinx}$ has exactly one solution.
\end{proof}

We are finally in a position to prove Theorem \ref{thm:TailMScale}.\\

{\it Proof of Theorem \ref{thm:TailMScale}}
Let $\tilde\kappa$ be a limit point of $ny\bar m^{(n)}(ny)$.  Then $0< \tilde
\kappa <\infty$ by Lemmas \ref{prop:liminfPositive} and \ref{limsupfinite}.  Let $n_r$ be a subsequence such that $\lim_{r\to\infty} n_ry\bar m^{(n_r)}(n_ry)=\tilde \kappa$.
By Lemma \ref{lem:RawEquationRM} we have
\begin{equation}\label{eq:subsequenceEquation}
\lim_{r\to\infty}\frac{\left(\frac{-1}{\Gamma(1-\nu)}\right)\expectation{e^{-\lambda^{(n_r)}\bar m^{(n_r)}(n_ry)V }\big|V>n_ry}-\frac{\bar m^{(n_r)}(n_ry)(1-\rho^{(n_r)})(n_ry)^\nu}{l(n_ry)}}
{\left(\lambda^{(n_r)}n_ry\bar m^{(n_r)}(n_ry)\right)^\nu \left(\frac{l\left(\frac{1}{\lambda^{(n_r)}\bar m^{(n_r)}(n_ry)}\right)}{l(n_ry)}\right)}=1.
\end{equation}
Lemmas \ref{conditionalExpectation}, \ref{nulessthan2},  and \ref{slowratio} reduce equation \eqref{eq:subsequenceEquation} to
\begin{equation*}
\frac{\left(\frac{-1}{\Gamma(1-\nu)}\right)\expectation{e^{-\lambda \tilde\kappa T_\nu }}-\tilde \kappa \gamma y^{\nu-1} \left(\frac{-1}{\Gamma(1-\nu)}\right)}{\left(\lambda \tilde\kappa \right)^\nu}=1.
\end{equation*}
Thus, any limit point of $ny\bar m^{(n)}(ny)$ satisfies equation \eqref{eqinx}, of which the solution is called $\kappa (y)$, so Lemma \ref{lem:eqinKappa} implies the limit point is unique, so $\lim_{n\to\infty}ny\bar m^{(n)}(ny)\to \kappa$.  \hfill$\qed$

The properties of $\kappa(y)$ as a function of $y$ are established below.

\subsection{Properties of $\kappa(y)$}

In this section we describe several properties of the function $\kappa$.  In particular $\kappa(y)$ is uniformly bounded above and regularly varying with parameter $1-\nu$.  First we need asymptotic properties of an inverse function.

\begin{lem}\label{prop:InverseRegVar}
Suppose $G:(0,\infty)\to(0,\infty)$ is nonincreasing, invertable, $\lim_{t\downarrow 0}G(t)=\infty$, and $G$ is regularly varying at zero with parameter $-\alpha$ for $0\leq \alpha\leq \infty$.  Then $G^{-1}$ is regularly varying at infinity with parameter ${-1/\alpha}$.
\end{lem}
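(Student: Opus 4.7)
The plan is to reduce the statement to the well-known fact that the inverse at infinity of a function that is regularly varying at infinity with positive index $\alpha$ is itself regularly varying at infinity with index $1/\alpha$ (e.g.\ Theorem 1.5.12 in \cite{RegVar}). The reduction is by the standard trick of pre- and post-composing with $t\mapsto 1/t$ to transform regular variation at zero into regular variation at infinity.

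Concretely, I would define $H(x)\define 1/G(1/x)$ for $x>0$. Since $G$ is nonincreasing on $(0,\infty)$, invertible, and $G(t)\to\infty$ as $t\downarrow0$, the function $H$ is nondecreasing, invertible, and satisfies $H(x)\to 0$ as $x\to 0$ and $H(x)\to\infty$ as $x\to\infty$; more usefully, its inverse is exactly
\begin{equation*}
H^{-1}(y)=\frac{1}{G^{-1}(1/y)},\qquad y>0,
\end{equation*}
as one checks by solving $H(x)=y$. Next I would verify that $H$ is regularly varying at infinity with parameter $\alpha$: for $\lambda>0$, the substitution $t=1/x$ gives
\begin{equation*}
\frac{H(\lambda x)}{H(x)}=\frac{G(t)}{G(t/\lambda)}\longrightarrow \lambda^{\alpha}\qquad\text{as }x\to\infty,
\end{equation*}
since $G$ is regularly varying at zero with parameter $-\alpha$.

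With $H$ nondecreasing, unbounded at infinity, and regularly varying at infinity with parameter $\alpha\in(0,\infty)$, Theorem 1.5.12 of \cite{RegVar} yields that $H^{-1}$ is regularly varying at infinity with parameter $1/\alpha$. Writing $\tilde G^{-1}(y)\define G^{-1}(1/y)=1/H^{-1}(y)$, it follows that $\tilde G^{-1}$ is regularly varying at infinity with parameter $-1/\alpha$. Undoing the $y\mapsto 1/y$ substitution by computing
\begin{equation*}
\frac{G^{-1}(\lambda y)}{G^{-1}(y)}=\frac{\tilde G^{-1}(1/(\lambda y))}{\tilde G^{-1}(1/y)}\longrightarrow \lambda^{-1/\alpha}\qquad\text{as }y\to\infty
\end{equation*}
gives the claim for $\alpha\in(0,\infty)$.

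The only genuine obstacle is handling the boundary cases $\alpha=0$ and $\alpha=\infty$, where $1/\alpha$ is interpreted as $\infty$ and $0$ respectively, and the notion of regular variation is the extended one (rapid variation, respectively slow variation). I would handle each by arguing directly on $H$: if $H$ is slowly varying at infinity then $H(\lambda x)/H(x)\to 1$ forces $H^{-1}(\lambda y)/H^{-1}(y)\to\infty$ for every $\lambda>1$, so $H^{-1}$ is rapidly varying, and vice versa; the relation $G^{-1}(y)=1/H^{-1}(1/y)$ then transfers the conclusion to $G^{-1}$. This case analysis is the only place routine references do not apply verbatim, but it is entirely standard once one has the change of variable set up.
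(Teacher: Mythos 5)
Your overall strategy --- convert regular variation at zero into regular variation at infinity via $t\mapsto 1/t$ and invoke the standard inverse theorem --- is exactly the route the paper takes (it cites Proposition 0.8 of \cite{Resnick} where you cite Theorem 1.5.12 of \cite{RegVar}). But your concrete transformation $H(x)=1/G(1/x)$ has one reciprocal too many, and this breaks the argument at several points. Since $G$ is nonincreasing, $x\mapsto G(1/x)$ is nondecreasing, so $H$ is in fact \emph{nonincreasing}; since $G(t)\to\infty$ as $t\downarrow0$, $H(x)\to 0$ (not $\infty$) as $x\to\infty$; and your own computation $H(\lambda x)/H(x)=G(t)/G(t/\lambda)$ converges to $\lambda^{-\alpha}$, not $\lambda^{\alpha}$ (substitute $s=t/\lambda$ and use $G(\lambda s)/G(s)\to\lambda^{-\alpha}$). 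Consequently Theorem 1.5.12, which requires a nondecreasing function tending to infinity with positive index, does not apply to your $H$. Moreover, even granting your claims about $H$, the final step is invalid: knowing that $\tilde G^{-1}$ is regularly varying at infinity tells you nothing about $\tilde G^{-1}(1/(\lambda y))/\tilde G^{-1}(1/y)$ as $y\to\infty$, because the arguments $1/(\lambda y)$ and $1/y$ tend to $0$; that ratio is governed by the behavior of $\tilde G^{-1}$ near zero, which regular variation at infinity does not control.

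The fix is to drop the outer reciprocal: set $H(x)=G(1/x)$. Then $H$ is genuinely nondecreasing, $H(x)\to\infty$ as $x\to\infty$, and $H(\lambda x)/H(x)=G(t/\lambda)/G(t)\to\lambda^{\alpha}$, so the inverse theorem applies and gives $H^{-1}$ regularly varying at infinity with index $1/\alpha$. Solving $G(1/x)=y$ gives $H^{-1}(y)=1/G^{-1}(y)$ directly --- no second substitution is needed at the end --- and taking reciprocals shows $G^{-1}$ is regularly varying at infinity with index $-1/\alpha$. This corrected version is precisely the paper's proof, which works with $G\circ h$ for $h(t)=1/t$ and uses $(G\circ h)^{-1}=h\circ G^{-1}$. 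Your remark about the boundary cases $\alpha\in\{0,\infty\}$ is sensible in spirit, but as written it inherits the same errors; after the correction the same direct argument on $H$ goes through.
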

\begin{proof}
Define $h:\mathbb (0,\infty)\to (0,\infty)$ by $h(t)=1/t$.  We have $G \circ h$ is regularly varying at infinity with parameter $\alpha$, $\lim_{t\to \infty} G \circ h(t)=\infty$, and $G \circ h$ is nondecreasing.  Thus, Proposition 0.8 in \cite{Resnick} gives $(G \circ h)^{-1}=h \circ G^{-1}$ is regularly varying at infinity with parameter $1/\alpha$.  Since the parameter of a composition of regularly varying functions at infinity is the product of the parameters, and $h$ is regularly varying at infinity with parameter $-1$, we have $h \circ h \circ G^{-1}=G^{-1}$ is regularly varying with parameter $-1/\alpha$.
\end{proof}

\begin{lem}\label{lem:propOfKappa}
For fixed $(\lambda, \gamma, \nu)$, $\kappa(y)$ defined implicitly by equation
\eqref{eqinx} is continuous and regularly varying with parameter $1-\nu$ if
$\gamma>0$ and $\kappa(y)$ is constant if $\gamma=0$.  Moreover,
$\kappa(y)\le\frac{1}{\lambda}\left(\frac{-1}{\Gamma(1-\nu)}\right)^{1/\nu}$.
\end{lem}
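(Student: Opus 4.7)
The plan is to extract each of the four claims from the defining equation (\ref{eqinx}), which with the shorthand $A := \frac{-1}{\Gamma(1-\nu)} > 0$ reads
$$A\,\mathbb{E}[e^{-\lambda\kappa T_\nu}] - A\gamma y^{\nu-1}\kappa = (\lambda\kappa)^\nu.$$

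Three of the four claims come essentially for free. The uniform upper bound is immediate: since $A\gamma y^{\nu-1}\kappa \ge 0$ and $\mathbb{E}[e^{-\lambda\kappa T_\nu}] \le 1$, the equation forces $(\lambda\kappa)^\nu \le A$, which is the stated bound. Constancy of $\kappa$ when $\gamma = 0$ is immediate since the equation has no $y$-dependence in that case. For continuity in $y$, I would argue that if $y_n \to y_0 > 0$, then $\{\kappa(y_n)\}$ is bounded by the upper bound just derived, so by compactness it has a convergent subsequence; joint continuity of the defining equation in $(\kappa,y)$ shows any such limit satisfies (\ref{eqinx}) at $y_0$, and uniqueness of solutions (Lemma \ref{lem:eqinKappa}) then forces it to equal $\kappa(y_0)$, so the whole sequence converges to $\kappa(y_0)$.

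The regular-variation claim is where the real work lies. I would rearrange (\ref{eqinx}) to isolate $y^{\nu-1}$, giving $y^{\nu-1} = G(\kappa)$, where
$$G(\kappa) := \frac{\mathbb{E}[e^{-\lambda\kappa T_\nu}]}{\gamma\kappa} - \frac{\lambda^\nu\kappa^{\nu-1}}{A\gamma}.$$
This $G$ is continuous and strictly decreasing on $(0,\kappa^*)$, where $\kappa^*$ is its unique positive zero, and satisfies $G(\kappa)\to\infty$ as $\kappa\downarrow 0$. Because $\nu > 1$, the second term vanishes faster than $1/\kappa$, so $\kappa G(\kappa)\to 1/\gamma$ as $\kappa\downarrow 0$; equivalently, $G$ is regularly varying at zero with index $-1$. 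Since $\kappa(y) = G^{-1}(y^{\nu-1})$ and $\kappa(y)\to 0$ as $y\to\infty$, I can either apply Lemma \ref{prop:InverseRegVar} with $\alpha = 1$ (yielding $G^{-1}$ regularly varying at infinity with index $-1$) and compose with $y\mapsto y^{\nu-1}$, or pass directly to the limit in the identity $y^{\nu-1}\kappa(y) = \kappa(y)\,G(\kappa(y)) \to 1/\gamma$ to obtain the sharper asymptotic $\kappa(y)\sim y^{1-\nu}/\gamma$. Either route shows $\kappa$ is regularly varying at infinity with index $1-\nu$.

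The main obstacle I foresee is a minor mismatch in applying Lemma \ref{prop:InverseRegVar}: its hypotheses require the domain $(0,\infty)$, whereas my $G$ is naturally defined only on $(0,\kappa^*)$. This is easily handled by extending $G$ continuously to any positive, strictly decreasing function on $(\kappa^*,\infty)$, since the conclusion depends only on behavior near zero; alternatively the direct passage-to-limit argument above bypasses the lemma entirely and supplies the explicit leading-order constant $1/\gamma$ as a bonus.
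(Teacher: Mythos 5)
Your proposal is correct and follows essentially the same route as the paper: isolate $y$ as a function of $\kappa$ (you write $y^{\nu-1}=G(\kappa)$, the paper works with $y(\kappa)$ directly), observe that this function is regularly varying at zero because the Laplace transform of $T_\nu$ is continuous at $0$ and the $(\lambda\kappa)^\nu$ term is negligible, and invert via Lemma \ref{prop:InverseRegVar}; the uniform bound and the $\gamma=0$ case are handled identically. The only departures are cosmetic: your compactness-plus-uniqueness argument for continuity replaces the paper's monotone-inverse argument, and your direct limit $y^{\nu-1}\kappa(y)\to 1/\gamma$, giving $\kappa(y)\sim y^{1-\nu}/\gamma$, is a sharpening the paper does not state.
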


\begin{proof}
If $\gamma=0$, then $\kappa$ satisfies
$\left(\frac{-1}{\Gamma(1-\nu)}\right)\expectation{e^{-\lambda \kappa
  T_{\nu}}}=(\lambda \kappa)^{\nu}$, so $\kappa$ is constant.
If $\gamma>0$, then $\kappa$ satisfies
\begin{equation*}
\left(\frac{\left(\frac{-1}{\Gamma(1-\nu)}\right)\expectation{e^{-\lambda \kappa T_{\nu}}}-(\lambda \kappa)^{\nu}}{\kappa \gamma\left(\frac{-1}{\Gamma(1-\nu)}\right)}\right)^{1/(\nu-1)}=y.
\end{equation*}
Since
\begin{equation*}
\begin{split}
\kappa&\mapsto \left(\frac{-1}{\Gamma(1-\nu)}\right)\expectation{e^{-\lambda \kappa T_{\nu}}} \text{ is strictly decreasing},\\
\kappa&\mapsto -(\lambda \kappa)^{\nu} \text{ is strictly decreasing and},\\
\kappa&\mapsto \kappa \gamma \left(\frac{-1}{\Gamma(1-\nu)}\right) \text{ is strictly increasing},
\end{split}
\end{equation*}
and each of these functions is continuous, we see that the inverse function $\kappa\mapsto y(\kappa)$ is strictly decreasing and continuous.  So, $y\mapsto\kappa(y)$ is continuous.

The moment generating function of $T_\nu$ is continuous at zero, so 
\[ \kappa \mapsto
  \left(\frac{\left(\frac{-1}{\Gamma(1-\nu)}\right)\expectation{e^{-\lambda
    \kappa T_{\nu}}}-(\lambda \kappa)^{\nu}}{\gamma
      \left(\frac{-1}{\Gamma(1-\nu)}\right)}\right)^{1/(\nu-1)}\]
is a slowly varying function at zero.  Thus, $y(\kappa)$ is regularly varying at zero with parameter $-1/(\nu-1)$.
So, by Lemma \ref{prop:InverseRegVar} we have $\kappa(y)$ is regularly varying at infinity with parameter $1-\nu$.

From equation $\eqref{eqinx}$ we have
\begin{equation*}
\begin{split}
\kappa&=\frac{1}{\lambda}\left(\left(\frac{-1}{\Gamma(1-\nu)}\right)\expectation{e^{-\lambda \kappa T_\nu}}-\kappa \gamma y^{\nu-1}\left(\frac{-1}{\Gamma(1-\nu)}\right)\right)^{1/\nu}\\
&\leq \frac{1}{\lambda}\left(\frac{-1}{\Gamma(1-\nu)}\right)^{1/\nu}.
\end{split}
\end{equation*}
\end{proof}

The following corollary follows from the monotonicity of $\bar m$ and replacing $ny$ in the proof of Theorem \ref{thm:TailMScale} with $ny+b$.  Uniform convergence follows from pointwise convergence and for each $n$, $n\bar m^{(n)}(ny+b)$ is nonincreasing and converging to zero as $y$ goes to infinity, while $\frac{\kappa(y)}{y}$ is continuous and converges to zero. This property also follows from the fact that $\kappa(y)$ is regularly varying, and is necessary in the next section.

\begin{coro}\label{coro:FudgeTail}
Under the assumptions of Lemma \ref{lem:RawEquationRM}, let $b$ be a real
number.  Then
\begin{equation*}
\lim_{n\to\infty}n\bar m^{(n)}(ny+b)=\frac{\kappa(y)}{y}.
\end{equation*}
Moreover, the convergence is uniform on intervals bounded away from zero.
\end{coro}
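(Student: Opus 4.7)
The plan is to establish pointwise convergence via a sandwich argument based on the monotonicity of $\bar m^{(n)}$, and then promote pointwise to uniform convergence on intervals bounded away from zero using Pólya's theorem combined with a tail estimate based on the bound on $\kappa$ from Lemma \ref{lem:propOfKappa}.

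For pointwise convergence, fix $y > 0$ and the real number $b$. For any $\epsilon \in (0, y)$, choose $n_0$ so that $|b| < n\epsilon$ for all $n \geq n_0$; then $n(y-\epsilon) \leq ny + b \leq n(y+\epsilon)$, and monotonicity of $\bar m^{(n)}$ gives
$$n\bar m^{(n)}(n(y+\epsilon)) \leq n\bar m^{(n)}(ny+b) \leq n\bar m^{(n)}(n(y-\epsilon)).$$
Applying Theorem \ref{thm:TailMScale} at $y \pm \epsilon$ yields
$$\frac{\kappa(y+\epsilon)}{y+\epsilon} \leq \liminf_{n\to\infty} n\bar m^{(n)}(ny+b) \leq \limsup_{n\to\infty} n\bar m^{(n)}(ny+b) \leq \frac{\kappa(y-\epsilon)}{y-\epsilon}.$$
Since $\kappa$ is continuous by Lemma \ref{lem:propOfKappa}, letting $\epsilon \downarrow 0$ gives the pointwise limit $\kappa(y)/y$.

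For uniform convergence on a compact interval $[a, c] \subset (0, \infty)$, each function $y \mapsto n\bar m^{(n)}(ny+b)$ is nonincreasing in $y$, and the pointwise limit $y \mapsto \kappa(y)/y$ is continuous. By Pólya's theorem on uniform convergence of monotone sequences to a continuous limit, the convergence is uniform on $[a, c]$. To extend to an unbounded interval $[a, \infty)$, I would use that $\kappa(y)/y \to 0$ as $y \to \infty$; when $\gamma > 0$ this is immediate from $\kappa$ being regularly varying of index $1-\nu < 0$, and when $\gamma = 0$ it follows from $\kappa$ being constant while $1/y \to 0$. Given $\delta > 0$, pick $N \geq a$ so that $\kappa(y)/y < \delta/2$ for $y \geq N$. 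Uniform convergence on the compact set $[a, N]$ furnishes $n_1$ with $|n\bar m^{(n)}(ny+b) - \kappa(y)/y| < \delta$ for $n \geq n_1$ and $y \in [a, N]$. For $y \geq N$, monotonicity gives $n\bar m^{(n)}(ny+b) \leq n\bar m^{(n)}(nN+b) \to \kappa(N)/N < \delta/2$, so for $n$ sufficiently large both quantities lie below $\delta/2$, giving the required uniform bound on $[N, \infty)$ as well.

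The main subtlety is the passage from compactly supported to unbounded uniform convergence; everything there rests on having a limit function that decays at infinity, which is exactly what Lemma \ref{lem:propOfKappa} provides. The pointwise step, by contrast, is essentially immediate once one has Theorem \ref{thm:TailMScale} and the continuity of $\kappa$, since the perturbation $b$ is absorbed into an arbitrarily small multiplicative change in the argument $y$.
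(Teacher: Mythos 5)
Your proof is correct and follows essentially the same route the paper sketches: the paper's one-line justification invokes the monotonicity of $\bar m^{(n)}$ for the pointwise limit (which your $\epsilon$-sandwich around $ny\pm n\epsilon$ implements cleanly, treating Theorem \ref{thm:TailMScale} as a black box rather than rerunning its proof with $ny+b$), and for uniformity it cites exactly your P\'olya-type argument — monotone functions converging pointwise to a continuous limit, with both the prelimit functions and $\kappa(y)/y$ tending to zero at infinity to handle unbounded intervals.
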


\section{Convergence of the one-dimensional distributions}

In this section we first write the waiting time in the second queue in terms of independent random variables.  Here we are using the fact that for the M/G/1 queue the length of an idle period is independent of the service times in the preceding busy period.

Recall that $M_k$ is the largest service time the in the $k$th busy period in
the first queue and $I_k$ is the duration of the idle period in the first
queue between the $k$th and $(k+1)$st busy period.

\begin{prop}\label{prop:form}
For each $n\geq 1$.
\begin{equation*}
R_{n}=\max_{k=1}^n \left(M_k-\sum_{j=k}^{n-1}I_j\right),
\end{equation*}
 
\end{prop}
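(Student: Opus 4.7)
The plan is in two stages. First, I would verify the one-step recursion
\[R_{k+1} = \max(R_k - I_k,\; M_{k+1}), \qquad k \ge 1,\]
with initial value $R_1 = M_1$; this is the recursion \eqref{Rchain} with the indexing that matches the conclusion. Second, I would unfold it by a straightforward induction on $n$.

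For the first stage, I would follow $W_2(\cdot)$ from time $\tilde t_k$ up to time $\tilde t_{k+1}$. At time $\tilde t_k$, Q1 has just emptied and $W_2(\tilde t_k) = R_k$. The next arrival to Q2 occurs at time $\tilde t_k + I_k + V_1^{(k+1)}$, where $V_1^{(k+1)}$ is the service time of the first job in the $(k{+}1)$st Q1 busy period; during this interval Q2 drains uninterrupted (or idles at $0$), so just before the arrival the workload equals $(R_k - I_k - V_1^{(k+1)})^+$, and just after it equals $\max(R_k - I_k,\; V_1^{(k+1)})$. Crucially, because each subsequent Q2 inter-arrival time within busy period $k{+}1$ equals the Q1 service time (and therefore also the Q2 service requirement) of the next incoming job, an easy induction on the number $j$ of jobs of busy period $k{+}1$ that have arrived shows that the Q2 workload right after the $j$th such arrival equals $\max(R_k - I_k,\; V_1^{(k+1)}, \dots, V_j^{(k+1)})$. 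Setting $j$ to the total number of jobs in busy period $k{+}1$ yields $R_{k+1} = \max(R_k - I_k,\; M_{k+1})$. The case $R_1 = M_1$ is the same argument with Q2 empty at time $0$.

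For the second stage the induction is routine. The base case $n = 1$ produces an empty sum and gives $R_1 = M_1$, matching. For the inductive step,
\begin{align*}
R_{n+1} &= \max(R_n - I_n,\; M_{n+1})\\
&= \max\!\left(\max_{1 \le k \le n}\!\left(M_k - \sum_{j=k}^{n-1} I_j\right) - I_n,\; M_{n+1}\right)\\
&= \max\!\left(\max_{1 \le k \le n}\!\left(M_k - \sum_{j=k}^{n} I_j\right),\; M_{n+1}\right)\\
&= \max_{1 \le k \le n+1}\!\left(M_k - \sum_{j=k}^{n} I_j\right),
\end{align*}
where the last equality uses that the sum corresponding to $k = n+1$ is empty.

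The main obstacle is the careful book-keeping in the first stage: one must identify precisely when $W_2$ is draining, when Q2 may go idle inside a Q1 busy period, and how the identity between Q1 service times, Q2 inter-arrival times, and Q2 service requirements makes the successive $\max$ operations compose cleanly. Once this bookkeeping is in place no further analysis is required.
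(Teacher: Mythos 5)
Your proof is correct and takes essentially the same route as the paper, whose entire argument is ``this follows by induction in \eqref{Rchain}'': your second stage is exactly that induction, and your first stage supplies the sample-path justification of the one-step recursion that the paper only gives informally in Section~2.1. One remark: your recursion $R_{k+1}=\max(R_k-I_k,\,M_{k+1})$ with $R_1=M_1$ is the indexing actually consistent with the stated closed form, whereas \eqref{Rchain} as printed has $M_k$ on the right-hand side --- an off-by-one that is harmless for everything downstream because the $M_k$ are i.i.d.\ and independent of the $I_k$, but your version is the one under which the induction closes.
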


\begin{proof}
This follows by induction in (\ref{Rchain}). 
\end{proof}

We now turn to the distribution of $R_{[nt]}^{(n)}$. We first investigate what
happens if we replace idle periods by their mean, and then show that we can
indeed make such a simplification.

The proof of the following preliminary result is a standard application of weak convergence by considering the sequence of measures $\phi_n=\sum_{k=1}^{[nt]}\frac{1}{n}\delta_{k/n}$.
\begin{prop}\label{prop:Sums}
If $nf(n,ny)\to g(y)$ uniformly on $[0,t]$, and $g$ is continuous on $[0,t]$ then
\begin{equation*}
\lim_{n\to\infty} \sum_{k=1}^{[nt]} f(n,k)=\int_{0}^t g(y)\,dy.
\end{equation*}
\end{prop}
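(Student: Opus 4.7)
The plan is to recognize the sum as an integral against the empirical measure $\phi_n := \sum_{k=1}^{[nt]} \tfrac{1}{n}\delta_{k/n}$ on $[0,t]$, then use the uniform convergence hypothesis to replace the integrand by $g$ at negligible cost, and finally identify the remaining expression as a Riemann sum for $\int_0^t g(y)\,dy$.

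Concretely, I would set $h_n(y) := nf(n,ny)$, so that $h_n(k/n)=nf(n,k)$ and
\[
\sum_{k=1}^{[nt]} f(n,k) \;=\; \frac{1}{n}\sum_{k=1}^{[nt]} h_n(k/n) \;=\; \int_{[0,t]} h_n(y)\,\phi_n(dy).
\]
Then I would split the error as
\[
\left|\int h_n\,d\phi_n - \int_0^t g(y)\,dy\right| \;\leq\; \int |h_n - g|\,d\phi_n \;+\; \left|\int g\,d\phi_n - \int_0^t g(y)\,dy\right|.
\]
The first term is bounded above by $\phi_n([0,t])\cdot\sup_{y\in[0,t]}|h_n(y)-g(y)| \leq t\cdot\sup_{y\in[0,t]}|h_n(y)-g(y)|$, which vanishes by the uniform convergence hypothesis. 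The second term equals $\bigl|\tfrac{1}{n}\sum_{k=1}^{[nt]} g(k/n) - \int_0^t g(y)\,dy\bigr|$, a standard Riemann-sum error for the continuous (hence uniformly continuous) function $g$ on the compact interval $[0,t]$, and so also tends to zero.

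No real obstacle arises here: the hypotheses (uniform convergence combined with continuity on a compact interval) reduce the argument to the two routine estimates above. The only bookkeeping worth noting is that all sample points $k/n$ for $k=1,\dots,[nt]$ lie in $[0,t]$ since $[nt]/n\leq t$, so the uniform bound on $[0,t]$ applies at every sample point; the negligible endpoint gap $t-[nt]/n\leq 1/n$ contributes at most $(\sup_{[0,t]}|g|)/n$ to the Riemann-sum error and is thus harmless. The measure-theoretic framing makes the structure transparent but is not strictly necessary — one could equivalently write the single telescoping estimate $\bigl|\sum f(n,k) - \int_0^t g\bigr|\leq t\|h_n-g\|_\infty + o(1)$ and finish by Riemann-sum convergence directly.
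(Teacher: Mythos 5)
Your argument is correct and follows essentially the same route the paper indicates: it introduces the same empirical measures $\phi_n=\sum_{k=1}^{[nt]}\frac{1}{n}\delta_{k/n}$ and carries out the standard weak-convergence/Riemann-sum estimate that the paper leaves to the reader. The two error terms are handled properly (uniform convergence controls $\int|h_n-g|\,d\phi_n$ via the total mass bound $\phi_n([0,t])\le t$, and continuity of $g$ on the compact interval handles the Riemann-sum discrepancy including the endpoint gap), so nothing further is needed.
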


\begin{prop}\label{prop:discountedExtreme}
 Under the assumptions of Lemma \ref{lem:RawEquationRM}, for fixed
 $t\ge 0$ and $x>0$ we have
\begin{multline*}
\lim_{n\to\infty}\probability{\frac{1}{n}\max_{k=1}^{[nt]}
\left(M_k^{(n)}-\frac{k-1}{\lambda}\right)\leq x} \\ =
\left\{\begin{array}{ll}
 \left(1+\frac{t}{x\lambda}\right)^{-\lambda\kappa}, & \text{ if } \gamma=0,\\
\exp\left\{-\lambda\int_x^{x+t/\lambda}\kappa(y)/y\,dy\right\},& \text{ if }
\gamma>0.
\end{array} \right.
\end{multline*}
\end{prop}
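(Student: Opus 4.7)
My approach is to exploit that for fixed $n$ the random variables $M_1^{(n)}, M_2^{(n)}, \ldots$ are i.i.d.\ with common distribution $m^{(n)}$ (as noted in the model description, busy periods are i.i.d.\ cycles), so that by independence
\[
P_n(x) := \probability{\frac{1}{n}\max_{k=1}^{[nt]}\!\Big(M_k^{(n)}-\tfrac{k-1}{\lambda}\Big)\le x} = \prod_{k=1}^{[nt]} m^{(n)}\!\Big(nx+\tfrac{k-1}{\lambda}\Big).
\]
Taking logarithms and using $\log(1-u) = -u + O(u^2)$ on the small quantities $\bar m^{(n)}(nx + (k-1)/\lambda)$, the problem reduces to evaluating the asymptotics of
\[
S_n := \sum_{k=1}^{[nt]} \bar m^{(n)}\!\Big(nx+\tfrac{k-1}{\lambda}\Big)
\]
and showing the quadratic remainder is negligible.

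For the remainder, Corollary \ref{coro:FudgeTail} together with Lemma \ref{limsupfinite} gives $\bar m^{(n)}(nx + (k-1)/\lambda) = O(1/n)$ uniformly in $k\le[nt]$, so $\sum_k [\bar m^{(n)}(\cdot)]^2 = O(1/n)$, which tends to zero. For the main term $S_n$, observe that with $f(n,k) := \bar m^{(n)}(nx + (k-1)/\lambda)$ we have
\[
n f(n,ny) = n\bar m^{(n)}\!\Big(n\big(x+\tfrac{y}{\lambda}\big) - \tfrac{1}{\lambda}\Big),
\]
which by Corollary \ref{coro:FudgeTail} (applied with the shift $b = -1/\lambda$) converges to $g(y) := \kappa(x+y/\lambda)/(x+y/\lambda)$ uniformly for $y \in [0,t]$, since $x>0$ keeps the argument bounded away from the singularity of $\kappa(\cdot)/\cdot$ at zero. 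Continuity of $g$ on $[0,t]$ is given by Lemma \ref{lem:propOfKappa}, so Proposition \ref{prop:Sums} applies and yields, after the substitution $u = x + y/\lambda$,
\[
S_n \to \int_0^t g(y)\,dy = \lambda \int_x^{x+t/\lambda} \frac{\kappa(u)}{u}\,du.
\]

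Combining the two displays gives $\log P_n(x) \to -\lambda \int_x^{x+t/\lambda} \kappa(u)/u\,du$, which is the $\gamma>0$ formula. When $\gamma = 0$, Lemma \ref{lem:propOfKappa} says $\kappa$ is a constant, so the integral evaluates to $\kappa\log(1 + t/(x\lambda))$ and exponentiation yields $(1+t/(x\lambda))^{-\lambda\kappa}$, as claimed. The main obstacle I expect is not conceptual but bookkeeping: verifying that the hypothesis of Proposition \ref{prop:Sums} is met requires the shifted version of the convergence in Corollary \ref{coro:FudgeTail} to be genuinely uniform on $[x, x+t/\lambda]$, which in turn relies on the compactness of this interval, the continuity of $\kappa(y)/y$, and the fact that $x>0$ separates us from the singularity at zero — all of which are available from the earlier results but must be assembled carefully.
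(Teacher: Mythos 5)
Your proposal is correct and follows essentially the same route as the paper: factor the probability using independence of the $M_k^{(n)}$, take logarithms, apply Corollary \ref{coro:FudgeTail} (with shift $b=-1/\lambda$) for the uniform convergence of $n\bar m^{(n)}(n(x+y/\lambda)-1/\lambda)$, and invoke Proposition \ref{prop:Sums} to pass to the integral. The only cosmetic difference is that you control the quadratic remainder in $\log(1-u)$ explicitly, whereas the paper packages the same estimate as the uniform convergence of $\ln((1-z/n)^n)$ to $-z$ on compacts.
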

\begin{proof}
  Since $\left\{ M_k \right\}$ are independent random variables,
\begin{equation*}
\begin{split}
\probability{\frac{1}{n}\max_{k=1}^{[nt]} \left(M_k^{(n)}-\frac{k-1}{\lambda}\right)\leq x}&=\probability{\max_{k=1}^{[nt]} \left(M_k^{(n)}-\frac{k-1}{\lambda}\right)\leq n x}\\
&=\prod_{k=1}^{[nt]} \probability{M_k^{(n)}-\frac{k-1}{\lambda}\leq n x}\\
&=\prod_{k=1}^{[nt]} \probability{M_k^{(n)}\leq n x+\frac{k-1}{\lambda}}\\
&=\prod_{k=1}^{[nt]} m^{(n)}\left(n x+\frac{k-1}{\lambda}\right)\\
&=\exp\left\{\sum_{k=1}^{[nt]} \ln\left(m^{(n)}\left(n x+\frac{k-1}{\lambda}\right)\right)\right\}.
\end{split}
\end{equation*}
Let $f(n,k)=\ln\left(m^{(n)}\left(nx+\frac{k-1}{\lambda}\right)\right)$. Then
for fixed $y>0$, 
\begin{equation*}
\begin{split}
nf(n,ny)&=n\ln\left(m^{(n)}\left(n x+\frac{ny-1}{\lambda}\right)\right)\\
&=\ln\left(\left(1-\frac{n\bar m^{(n)}\left(n x+ny/\lambda-\frac{1}{\lambda}\right)}{n}\right)^n\right).
\end{split}
\end{equation*}
The function $\ln((1-z/n)^n)\to-z$ uniformly on compact intervals as
$n\to\infty$ and $n\bar
m^{(n)}\left(n\left(x+y/\lambda\right)-\frac{1}{\lambda}\right)\to
\kappa(x+y/\lambda)/(x+y/\lambda)$ uniformly on $y\in[0,t]$ as $n\to\infty$, by
Corollary \ref{coro:FudgeTail} because $x>0$.  Thus, $nf(n,ny)\to
-\kappa(x+y/\lambda)/(x+y/\lambda)$, uniformly for $y\in[0,t]$ since for each $n$, $m^{(n)}$ is nondecreasing and the limit is continuous.
Now continuity of the exponential function and Proposition \ref{prop:Sums} gives
\begin{equation*}
\begin{split}
\lim_{n\to\infty}\probability{\frac{1}{n}\max_{k=1}^{[nt]}
\left(M_k^{(n)}-\frac{k-1}{\lambda}\right)\leq
x}&=\exp\left\{-\int_0^t\frac{\kappa(x+y/\lambda)}{x+y/\lambda}dy\right\} \\
&=\exp\left\{-\lambda\int_x^{x+t/\lambda}\kappa(y)/y\,dy\right\}.
\end{split}
\end{equation*}
Note that the above proof holds for all $\gamma\ge0$; the case $\gamma=0$ is
just a rewriting of the previous expression since $\kappa$ is constant. 
\end{proof}

The sequence of idle periods is i.i.d.\ exponential $\lambda^{(n)}$ in the $n$th system.  Since the largest job in a busy period is independent of the idle period that follows, it is convenient to reindex the sequence of idle periods.  This is why we write $\sum_{i=1}^{k-1}I_i^{(n)}$ instead of $\sum_{i=k}^{n-1}I_i^{(n)}$ in the following proposition.
This proposition shows that, due to monotonicity of the maximum and Kolmogorov's theorem, we can replace $\sum_{i=1}^{k-1}I_i^{(n)}$ by the limit of its mean.

\begin{prop}\label{idlenessNearConstant}
  For all $x>0$, 
\begin{multline}\label{eq:idlenesslim}
\lim_{n\to\infty}\probability{\frac{1}{n}\max_{k=1}^n\left(M_k^{(n)}-\sum_{i=1}^{k-1}I_i^{(n)}\right)\leq x}\\
=\lim_{n\to\infty}\probability{\frac{1}{n}\max_{k=1}^n\left(M_k^{(n)}-\frac{k-1}{\lambda}\right)\leq x}.
\end{multline}
\end{prop}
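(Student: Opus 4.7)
The plan is to show that the two random variables inside the probabilities differ by an amount that is $o(n)$ uniformly in $k\leq n$, and then invoke the continuity of the limit distribution (established in Proposition \ref{prop:discountedExtreme}) to transfer this closeness to the probabilities. Concretely, write
\[
V_n=\frac{1}{n}\max_{k=1}^n\left(M_k^{(n)}-\sum_{i=1}^{k-1}I_i^{(n)}\right),\qquad U_n=\frac{1}{n}\max_{k=1}^n\left(M_k^{(n)}-\frac{k-1}{\lambda}\right),
\]
and define the error
\[
\Delta_n=\frac{1}{n}\max_{1\leq k\leq n}\left|\sum_{i=1}^{k-1}I_i^{(n)}-\frac{k-1}{\lambda}\right|.
\]
On the event $\{\Delta_n\leq \epsilon\}$ one has $|V_n-U_n|\leq\epsilon$, so
\[
\probability{U_n\leq x-\epsilon}-\probability{\Delta_n>\epsilon}\leq \probability{V_n\leq x}\leq \probability{U_n\leq x+\epsilon}+\probability{\Delta_n>\epsilon}.
\]

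The key step is to show $\Delta_n\Par 0$. Couple the idle periods by writing $I_i^{(n)}=E_i/\lambda^{(n)}$ where $\{E_i\}$ are i.i.d.\ standard exponentials, independent of everything else, so that
\[
\sum_{i=1}^{k-1}I_i^{(n)}-\frac{k-1}{\lambda}=\frac{1}{\lambda^{(n)}}\sum_{i=1}^{k-1}(E_i-1)+(k-1)\left(\frac{1}{\lambda^{(n)}}-\frac{1}{\lambda}\right).
\]
The second term, divided by $n$, is bounded by $|1/\lambda^{(n)}-1/\lambda|\to 0$ uniformly in $k\leq n$. For the first term, Kolmogorov's maximal inequality (or the standard fact that $n^{-1}\max_{k\leq n}|\sum_{i=1}^{k-1}(E_i-1)|\to 0$ almost surely, which follows from the SLLN applied to $\sum(E_i-1)/n$ together with a Cesaro-type argument for the maxima of centered i.i.d.\ sums with finite mean) gives that $n^{-1}\max_{k\leq n}|\sum_{i=1}^{k-1}(E_i-1)|\to 0$ a.s. Combining these two estimates and using $\lambda^{(n)}\to\lambda>0$ yields $\Delta_n\to 0$ a.s., hence in probability.

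To conclude, fix $x>0$, let $F(y)=\lim_{n\to\infty}\probability{U_n\leq y}$ denote the limit identified in Proposition \ref{prop:discountedExtreme}, which is continuous in $y$ (being the exponential of an integral of the continuous function $\kappa(y)/y$). Since $\probability{\Delta_n>\epsilon}\to 0$, the sandwich inequality gives
\[
F(x-\epsilon)\leq \liminf_{n\to\infty}\probability{V_n\leq x}\leq\limsup_{n\to\infty}\probability{V_n\leq x}\leq F(x+\epsilon),
\]
and letting $\epsilon\downarrow 0$ using continuity of $F$ yields $\lim_n\probability{V_n\leq x}=F(x)=\lim_n\probability{U_n\leq x}$, as claimed. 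The main potential obstacle is the uniform control of $\Delta_n$, but the coupling reduces this to a classical maximal-type SLLN for centered i.i.d.\ exponentials, so no real subtlety arises.
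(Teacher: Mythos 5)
Your proposal is correct and follows essentially the same route as the paper: the same reduction of the difference of maxima to $\frac{1}{n}\max_k\bigl|\sum_{i=1}^{k-1}I_i^{(n)}-\frac{k-1}{\lambda}\bigr|$, the same splitting into the deterministic drift $(k-1)(1/\lambda^{(n)}-1/\lambda)$ plus a centered sum, and the same appeal to Kolmogorov's maximal inequality to kill the latter. The only differences are presentational (the explicit coupling $I_i^{(n)}=E_i/\lambda^{(n)}$, and spelling out the sandwich/continuity step that the paper leaves implicit).
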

\begin{proof}
The right hand side of \eqref{eq:idlenesslim} converges by Proposition
\ref{prop:discountedExtreme}. Using the inequality
$-\max(|b_k|) \le \max(a_k +b_k) -\max(a_k) \le \max(b_k) \le \max(|b_k|)$,
which implies that $|\max(a_k +b_k) -\max(a_k)|\le \max(|b_k|)$ for real
numbers $a_k$ and $b_k$, we have 

\begin{multline*}
  \left|\frac{1}{n}\max_{k=1}^n\left(M_k^{(n)}-\sum_{i=1}^{k-1}I_i^{(n)}\right)-\frac{1}{n}\max_{k=1}^n\left(M_k^{(n)}-\frac{k-1}{\lambda}\right)\right| \\
=\Bigg|\frac{1}{n}\max_{k=1}^n\left(M_k^{(n)}-\frac{k-1}{\lambda}+\frac{k-1}{\lambda}-\frac{k-1}{\lambda^{(n)}}+\frac{k-1}{\lambda^{(n)}}-\sum_{i=1}^{k-1}I_i^{(n)}\right)\\
-\frac{1}{n}\max_{k=1}^n\left(M_k^{(n)}-\frac{k-1}{\lambda}\right)\Bigg|\\
\leq\frac{1}{n}\max_{k=1}^n\left|\frac{k-1}{\lambda}-\frac{k-1}{\lambda^{(n)}}+\frac{k-1}{\lambda^{(n)}}-\sum_{i=1}^{k-1}I_i^{(n)}\right|\\
\leq\frac{(n-1)|\lambda-\lambda^{(n)}|}{n\lambda\lambda^{(n)}}+\frac{1}{n}\max_{k=1}^n\left(\left|\frac{k-1}{\lambda^{(n)}}-\sum_{i=1}^{k-1}I_i^{(n)}\right|\right).
\end{multline*}
 So it suffices to show
\begin{equation*}
\frac{1}{n}\max_{k=1}^n\left(\left|\sum_{i=1}^{k-1}\left(I_i^{(n)}-\frac{1}{\lambda^{(n)}}\right)\right|\right)\to 0,
\end{equation*}
in probability as $n\to\infty$.
This follows from Kolmogorov's maximal inequality. For each $\epsilon>0$,
\begin{equation*}
\begin{split}
&\probability{\frac{1}{n}\max_{k=1}^n\left(\left|\sum_{i=1}^{k-1}\left(I_i^{(n)}-\frac{1}{\lambda^{(n)}}\right)\right|\right)\geq\epsilon}\\
 &\qquad=\probability{\frac{1}{n}\max_{k=1}^{n-1}\left(\left|\sum_{i=1}^{k}\left(I_i^{(n)}-\frac{1}{\lambda^{(n)}}\right)\right|\right)\geq\epsilon}\\
&\qquad=\probability{\max_{k=1}^{n-1}\left(\left|\sum_{i=1}^{k}\left(I_i^{(n)}-\frac{1}{\lambda^{(n)}}\right)\right|\right)\geq n\epsilon}\\
&\qquad\leq \frac{1}{(n\epsilon)^2}\frac{n-1}{(\lambda^{(n)})^2}\to 0,
\end{split}
\end{equation*}
as $n\to\infty$ because $\lambda^{(n)}\to\lambda>0$.
\end{proof}

Finally, we prove the main result for this section, which implies Theorem
\ref{thm:one-dimensional-convergence}. Recall that we have assumed
$\rho^{(n)}=\lambda^{(n)} \expectation{V}$ and
$\frac{1-\rho^{(n)}}{n(1-F(n))}\to\gamma\geq 0$ as $n\to\infty$, and that
$1-F(t)= \left(\frac{-1}{\Gamma(1-\nu)}\right)t^{-\nu}l(t)$ for $1<\nu<2$ and
$l$ a slowly varying function.  Recall that $\lambda=\expectation{V}^{-1}$ and
let $\kappa(y)$ be such that the parameters $(\kappa,\lambda, \nu, \gamma, y)$
satisfy \eqref{eqinx}.

\begin{thm}

For fixed $t\ge0$ and $x>0$ we have
\begin{equation*}
\lim_{n\to\infty} \probability{\frac{1}{n}R_{[nt]}^{(n)}\leq x} =
\left\{\begin{array}{ll}
 \left(1+\frac{t}{x\lambda}\right)^{-\lambda\kappa}, & \text{ if } \gamma=0,\\
\exp\left\{-\lambda\int_x^{x+t/\lambda}\kappa(y)/y\,dy\right\},& \text{ if }
\gamma>0.
\end{array} \right.
\end{equation*}
\end{thm}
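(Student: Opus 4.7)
The plan is to chain together the three propositions of this section. By Proposition \ref{prop:form} applied with time index $[nt]$,
\[ R_{[nt]}^{(n)} = \max_{k=1}^{[nt]}\left( M_k^{(n)} - \sum_{j=k}^{[nt]-1} I_j^{(n)} \right). \]
I would first observe that the sequences $\{M_k^{(n)}\}_{k\ge 1}$ and $\{I_k^{(n)}\}_{k\ge 1}$ are mutually independent: the $M_k^{(n)}$ are i.i.d.\ because distinct busy periods are independent regenerative cycles, the $I_k^{(n)}$ are i.i.d.\ $\mathrm{exp}(\lambda^{(n)})$ by the memoryless property of Poisson arrivals, and the two collections are independent of each other. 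Reversing the index via $k\mapsto[nt]+1-k$, and using that the partial sums of $[nt]-1$ i.i.d.\ exponentials read in reverse have the same joint distribution as the usual partial sums, one obtains
\[ R_{[nt]}^{(n)} \stackrel{d}{=} \max_{k=1}^{[nt]}\left( M_k^{(n)} - \sum_{i=1}^{k-1} I_i^{(n)} \right), \]
which is the form treated by Proposition \ref{idlenessNearConstant}.

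Next I would apply the obvious extension of Proposition \ref{idlenessNearConstant} in which the outer maximum runs to $[nt]$ rather than $n$. Its proof goes through verbatim: using $|\max_k(a_k+b_k)-\max_k a_k|\le \max_k|b_k|$, matters reduce to showing
\[ \frac{1}{n}\max_{k\le[nt]}\left| \sum_{i=1}^{k-1}\left(I_i^{(n)}-\tfrac{1}{\lambda^{(n)}}\right) \right| \Par 0, \]
for which Kolmogorov's maximal inequality gives a bound of order $[nt]/(n\epsilon\lambda^{(n)})^2 = O(t/(n\epsilon^2))\to 0$; the deterministic shift from $1/\lambda^{(n)}$ to $1/\lambda$ contributes an additional error of order $([nt]/n)|\lambda-\lambda^{(n)}|/(\lambda\lambda^{(n)})\to 0$. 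Together these give
\[ \lim_{n\to\infty}\probability{\tfrac{1}{n}R_{[nt]}^{(n)}\le x} = \lim_{n\to\infty}\probability{\tfrac{1}{n}\max_{k=1}^{[nt]}\bigl(M_k^{(n)}-\tfrac{k-1}{\lambda}\bigr)\le x}. \]

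Finally I would invoke Proposition \ref{prop:discountedExtreme} to identify the right-hand limit with the closed-form expression in the theorem; the two displayed cases correspond respectively to $\gamma=0$ (where $\kappa$ is constant and $\int_x^{x+t/\lambda}\kappa/y\,dy=\kappa\ln(1+t/(x\lambda))$ gives the power form $(1+t/(x\lambda))^{-\lambda\kappa}$) and $\gamma>0$. Essentially all of the substance has been done in Propositions \ref{prop:form}, \ref{idlenessNearConstant} and \ref{prop:discountedExtreme}, so there is no real obstacle beyond bookkeeping. The only point requiring real care is the reindexing step, where one must verify that the full joint law of $(M_k^{(n)}-\sum_{j=k}^{[nt]-1}I_j^{(n)})_{k=1}^{[nt]}$ is invariant under $k\mapsto[nt]+1-k$; this is precisely what the mutual independence of the two i.i.d.\ families above supplies.
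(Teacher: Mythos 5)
Your proof is correct and follows essentially the same route as the paper's: Proposition \ref{prop:form}, the distributional reindexing of the idle-period sums, Proposition \ref{idlenessNearConstant}, and Proposition \ref{prop:discountedExtreme}. You are in fact somewhat more careful than the paper on the two points it glosses over, namely the justification of the exchangeability step $k\mapsto[nt]+1-k$ and the observation that Proposition \ref{idlenessNearConstant} must be applied with the maximum running to $[nt]$ rather than $n$.
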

\begin{proof}
By proposition \ref{prop:form}
\begin{equation*}
\probability{\frac{1}{n}R_{[nt]}^{(n)}\leq x} =\probability{\frac{1}{n}\max_{k=1}^{[nt]} \left(M_k^{(n)}-\sum_{j=k}^{[nt]-1}I_j^{(n)}\right)\leq x}.
\end{equation*}
For each $n$, the iid collections $\{I_k^{(n)}\}$ and $\{M_k^{(n)}\}$ are independent so
\begin{equation*}
\max_{k=1}^{[nt]} \left(M_k^{(n)}-\sum_{j=k}^{[nt]-1}I_j^{(n)}\right)\sim\max_{k=1}^{[nt]} \left(M_k^{(n)}-\sum_{j=1}^{k-1}I_j^{(n)}\right).
\end{equation*}
By proposition \ref{idlenessNearConstant}
\begin{equation*}
\lim_{n\to\infty}\probability{\frac{1}{n}R_{[nt]}^{(n)}\leq x} =\lim_{n\to\infty}\probability{\frac{1}{n}\max_{k=1}^{[nt]}\left(M_k^{(n)}-(k-1)/\lambda\right)\leq x},
\end{equation*}
and so by proposition \ref{prop:discountedExtreme},
\begin{equation}
\label{eq:ZtDist}
\lim_{n\to\infty}\probability{\frac{1}{n}R_{[nt]}^{(n)}\leq x} =
\left\{\begin{array}{ll}
 \left(1+\frac{t}{x\lambda}\right)^{-\lambda\kappa}, & \text{ if } \gamma=0,\\
\exp\left\{-\lambda\int_x^{x+t/\lambda}\kappa(y)/y\,dy\right\},& \text{ if }
\gamma>0.
\end{array} \right.
\end{equation}
\end{proof}

\begin{defn}
Let $\Phi(t,x)$ be the right hand side of \eqref{eq:ZtDist} for $t\geq0$ and $x > 0$, $\Phi(t,x)=0$ for $t\geq 0$ and $x<0$, $\Phi(t,0)=0$ for $t>0$, and $\Phi(0,0)=1$.
\end{defn}

Note that  $x\mapsto\Phi(t,x)$ is a distribution function for each $t\geq 0$.
To see this, recall that  $\kappa(y)$ is constant when $\gamma = 0$ so we may
write
$\Phi(t,x)=\exp\left\{-\lambda\int_x^{x+t/\lambda}\kappa(y)/y\,dy\right\}$ for
all $\gamma\geq0$.  By the proof of Lemma \ref{prop:liminfPositive} we have
\[\kappa(y)\geq \left(\gamma +\frac{\lambda \nu}{\nu-1} + \frac{K C_K
\nu}{2-\nu}\right)^{-1}\]
for $0\leq y \leq 1$, so
$\int_x^{x+t/\lambda}\kappa(y)/y\,dy $ goes to $\infty$ as $x$ goes to zero.
Thus for $t>0$ we have $\Phi(t,x)\downarrow 0$ as $x\downarrow 0$.  Since
$\kappa$ is bounded, for each $t>0$ we have $\Phi(t,x)\to 1$ as $x\to \infty$.
For $t>0$, $\Phi(t,\cdot)$ is strictly increasing because $\kappa(y)/y$ is
strictly decreasing.

\section{Process level convergence}

To prove Theorem \ref{thm:Markov}, we begin with a description of the limit process.

\begin{defn}
Let $Z_t$ be the random variable with distribution function $\Phi(t,x)$,
and observe that $Z_0=0$.
\end{defn}

Then for fixed $t$, $\frac{1}{n}R_{[nt]}^{(n)}$ converges in distribution to $Z_t$.
Note that we have $Z_t\Rightarrow 0$ as $t\downarrow 0$ since $Z_t\geq 0$
a.s.\ and for any $x>0$, $\lim_{t\to0}\probability{Z_t> x}=0,$ for all
$\gamma\ge0$, so $Z_t\to0$ in probability. 

Now we observe a property of $Z_t$ analogous to $N(0,s)+N(0,t)\sim N(0,s+t)$
for $N(0,s)$ and $N(0,t)$  independent normal random variables with mean zero
and variances $s$ and $t$.
\begin{lem}\label{lem:ZtLinear}
Let $Z_t$, $Z_s$, and $Z_{s+t}$ be independent random variables with distribution given by equation \eqref{eq:ZtDist}, then
\begin{equation}
\probability{\max[Z_t-s/\lambda, Z_s]\leq x}=\probability{Z_{s+t}\leq x}.
\end{equation}
\end{lem}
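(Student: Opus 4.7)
The plan is to verify the identity by a direct computation of distribution functions, exploiting independence and the explicit exponential form of $\Phi(t,x)$. For $x>0$, by independence of $Z_t$ and $Z_s$,
\[
\probability{\max[Z_t-s/\lambda,Z_s]\leq x}=\probability{Z_t\leq x+s/\lambda}\,\probability{Z_s\leq x}=\Phi(t,x+s/\lambda)\,\Phi(s,x).
\]
Writing each factor using the formula $\Phi(t,x)=\exp\bigl\{-\lambda\int_x^{x+t/\lambda}\kappa(y)/y\,dy\bigr\}$ (valid for all $\gamma\ge 0$, since when $\gamma=0$ the function $\kappa$ is constant and this formula still agrees with the $\left(1+t/(x\lambda)\right)^{-\lambda\kappa}$ expression), the product becomes
\[
\exp\!\left\{-\lambda\int_x^{x+s/\lambda}\frac{\kappa(y)}{y}\,dy\;-\;\lambda\int_{x+s/\lambda}^{x+s/\lambda+t/\lambda}\frac{\kappa(y)}{y}\,dy\right\}.
\]
The two integration intervals are adjacent, so they combine into the single interval $[x,\,x+(s+t)/\lambda]$, and the product equals $\Phi(s+t,x)=\probability{Z_{s+t}\leq x}$, as required.

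What remains is only to handle the boundary and degenerate cases that lie outside $x>0$. For $x<0$ both sides are $0$ by definition of $\Phi$. For $x=0$, $\Phi(s,0)=0$ whenever $s>0$, and similarly $\Phi(s+t,0)=0$, so both sides vanish; the case $s=t=0$ is trivial ($Z_0=0$ a.s.). When exactly one of $s,t$ is zero, the corresponding $Z$ is $0$ a.s., and the identity reduces to $\Phi(t,x)=\Phi(t,x)$ (using $Z_t\ge 0$ a.s.\ so that $\max[Z_t,0]=Z_t$).

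There is no real obstacle here: the key observation is simply that the tail exponent $\lambda\int_x^{x+u/\lambda}\kappa(y)/y\,dy$ is additive in the "time" parameter $u$ when one shifts the lower endpoint of integration accordingly, which is precisely the role of the deterministic drift $s/\lambda$ inside the maximum. The only mild subtlety is remembering to use the unified integral representation of $\Phi$ in the $\gamma=0$ case rather than the Pareto-type expression, which would obscure the additivity.
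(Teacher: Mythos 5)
Your proof is correct and follows essentially the same route as the paper's: use independence to factor the probability as $\Phi(t,x+s/\lambda)\,\Phi(s,x)$, write both factors via the unified exponential-integral form of $\Phi$, and concatenate the adjacent integration intervals. The extra care you take with the boundary cases $x\le 0$ and $s=0$ or $t=0$ is fine but not needed beyond what the paper records.
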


\begin{proof}
Compute
\begin{equation*}
\begin{split}
&\probability{\max[Z_t-s/\lambda, Z_s]\leq x}\\
&\qquad=\probability{Z_t\leq x+s/\lambda}\probability{Z_s\leq x}\\
&\qquad=\exp\left\{{-\lambda \int_{y=x+s/\lambda}^{x+s/\lambda+t/\lambda}\kappa(y)/y\,dy}\right\}\exp\left\{-\lambda \int_{y=x}^{x+s/\lambda}\kappa(y)/y\,dy\right\}\\
&\qquad=\exp\left\{-\lambda \int_{y=x}^{x+(s+t)/\lambda}\kappa(y)/y\,dy\right\}\\
&\qquad=\probability{Z_{s+t}\leq x}.
\end{split}
\end{equation*}
\end{proof}

We are now ready to describe the generator of our limit process.

\begin{defn}
Let $\hat C([0,\infty))$ be the space of continuous functionals on $[0,\infty)$ 
that converge to zero at infinity. The subspace $\hat C_c^1([0,\infty))$ contains
  the continuously differentiable functionals with compact support, and we
  define 
  \[ D = \{ f\in \hat C_c^1([0,\infty)) :\text{for some }a>0,\,\,  |f'(x)|\leq a
    x \text{ for all } x\}.\]
 For each $t\ge 0$ and $f\in\hat C([0,\infty))$, define the operator
\begin{equation}\label{eq:Tf}
T(t)f(x)=\expectation{f(\max(x-t/\lambda, Z_t))}.
\end{equation}
  Note that $T(0)=I$. \end{defn}

A strongly continuous, positive, contraction semigroup on $\hat C([0,\infty))$ whose generator is conservative is called a {\it Feller semigroup}.

\begin{lem}\label{lem:TtSemigroup}
$\{T(t)\}$ defines a Feller semigroup on $\hat C([0,\infty))$ with generator
  $A$, which is an extension to $\hat C([0,\infty))$ of $\hat A$ given by
\begin{equation}
  \hat Af(x)= \frac{-f'(x)}{\lambda} +\int_{x}^\infty
  f'(y)\frac{\kappa(y)}{y}\,dy,\qquad f\in D. 
\end{equation}
 Moreover, $D$ is a core for $A$.
\end{lem}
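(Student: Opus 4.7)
The plan is to verify in turn the Feller semigroup axioms, identify the generator's action on $D$, and establish the core property by checking both $T(t)$-invariance of $D$ and density in $\hat C([0,\infty))$, then appealing to a standard criterion.

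\textbf{Feller semigroup.} The semigroup law $T(s+t)=T(s)T(t)$ follows by conditioning: evaluating $T(s)T(t)f(x)$ with an independent copy $Z_t'$ of $Z_t$ gives $T(s)T(t)f(x)=\expectation{f\bigl(\max(x-(s+t)/\lambda,\,\max(Z_s-t/\lambda,Z_t'))\bigr)}$, and Lemma \ref{lem:ZtLinear} identifies $\max(Z_s-t/\lambda,Z_t')$ with $Z_{s+t}$ in distribution. Contraction and positivity are immediate. That $T(t)$ maps $\hat C([0,\infty))$ into itself follows from dominated convergence (for continuity in $x$) and from $\max(x-t/\lambda,Z_t)\to\infty$ as $x\to\infty$ (for vanishing at infinity). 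For strong continuity, pick $\delta$ by uniform continuity of $f\in\hat C([0,\infty))$ and then $t$ so small that $t/\lambda<\delta/2$ and $\probability{Z_t>\delta/2}<\epsilon$; on $\{Z_t\le\delta/2\}$ one checks $|\max(x-t/\lambda,Z_t)-x|\le\delta$ uniformly in $x$, yielding $\|T(t)f-f\|_\infty\le\epsilon(1+2\|f\|_\infty)$.

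\textbf{Action of $A$ on $D$.} Fix $f\in D$ and split
\begin{equation*}
T(t)f(x)-f(x)=[f(x-t/\lambda)-f(x)]+\expectation{(f(Z_t)-f(x-t/\lambda))1_{\{Z_t>x-t/\lambda\}}}.
\end{equation*}
Integration by parts on the expectation (boundary terms vanish because $f$ has compact support) produces $\int_{x-t/\lambda}^\infty f'(y)(1-\Phi(t,y))\,dy$. Using $1-\Phi(t,y)=1-\exp(-\lambda\int_y^{y+t/\lambda}\kappa(u)/u\,du)=t\kappa(y)/y+o(t)$ pointwise and the dominating bound $|f'(y)|\kappa(y)/y\le a\kappa(y)$ (finite by Lemma \ref{lem:propOfKappa}), dividing by $t$ and sending $t\downarrow 0$ gives
\begin{equation*}
\lim_{t\downarrow 0}\frac{T(t)f(x)-f(x)}{t}=-\frac{f'(x)}{\lambda}+\int_x^\infty f'(y)\frac{\kappa(y)}{y}\,dy=\hat Af(x),
\end{equation*}
uniformly in $x$ since $f'$ is compactly supported. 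Hence $D\subset\operatorname{Dom}(A)$ with $Af=\hat Af$; continuity and decay at infinity of $\hat Af$ follow from the same bound.

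\textbf{$T(t)$-invariance and density.} Fix $f\in D$ with $\operatorname{supp}(f)\subset[0,M]$. Since $Z_t\ge 0$, for $x<t/\lambda$ we have $\max(x-t/\lambda,Z_t)=Z_t$, so $T(t)f$ is constant on $[0,t/\lambda)$. For $x\ge t/\lambda$, differentiating $T(t)f(x)=f(x-t/\lambda)\Phi(t,x-t/\lambda)+\int_{x-t/\lambda}^\infty f(y)\,dF_{Z_t}(y)$ yields $(T(t)f)'(x)=f'(x-t/\lambda)\Phi(t,x-t/\lambda)$. The two pieces match continuously at $x=t/\lambda$ because $\Phi(t,0)=0$, which is the divergence of $\int_0^{t/\lambda}\kappa(y)/y\,dy$ established in Section 4. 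Thus $T(t)f\in C^1$, $|(T(t)f)'(x)|\le ax$, and $\operatorname{supp}(T(t)f)\subset[0,M+t/\lambda]$, so $T(t)f\in D$. For density of $D$ in $\hat C([0,\infty))$, first mollify and truncate any $f\in\hat C([0,\infty))$ to obtain a $C^1$ compactly supported $\tilde f$, then set $h_\epsilon(x)=\tilde f(0)+(\tilde f(x)-\tilde f(0))\chi_\epsilon(x)$ for a $C^1$ cutoff $\chi_\epsilon$ vanishing on $[0,\epsilon/2]$ and equal to $1$ on $[\epsilon,\infty)$; the check that $h_\epsilon\in D$ and $h_\epsilon\to\tilde f$ uniformly as $\epsilon\downarrow 0$ is routine. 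Combining invariance and density via Proposition 1.3.3 of \cite{ethier2009markov} yields the core property. The most delicate point is the $T(t)$-invariance: both the $C^1$ gluing at $x=t/\lambda$ and the linear gradient bound $|(T(t)f)'(x)|\le ax$ rely on the boundary degeneracy $\Phi(t,0)=0$ coming from Lemma \ref{prop:liminfPositive}.
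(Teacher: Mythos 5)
Your proof is essentially correct but routes two of the three main steps differently from the paper. For the generator you integrate by parts directly in $\expectation{(f(Z_t)-f([x-t/\lambda]^+))1_{\{Z_t>[x-t/\lambda]^+\}}}$ to get the single expression $\int_{[x-t/\lambda]^+}^\infty f'(y)(1-\Phi(t,y))\,dy$, whereas the paper applies Fubini to $\int(f(y)-f(x))\Phi(t,dy)$ and splits into a region below $x$ (shown to vanish) and a region above $x$; your version is more compact and handles $x=0$ and $x>0$ uniformly. For the core, your closed-form identity $(T(t)f)'(x)=f'(x-t/\lambda)\Phi(t,x-t/\lambda)$ for $x\ge t/\lambda$ is cleaner than the paper's dominated-convergence differentiation under the integral, and it makes both the $C^1$ gluing at $x=t/\lambda$ and the linear bound $|(T(t)f)'(x)|\le ax$ transparent; you also supply the density of $D$ in $\hat C([0,\infty))$, which the paper leaves implicit before invoking Proposition 1.3.3 of \cite{ethier2009markov}. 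Two small repairs are needed. First, your dominated-convergence step quotes as dominating function the bound $|f'(y)|\kappa(y)/y\le a\kappa(y)$ on the \emph{limit}; what you actually need is a bound uniform in $t$ on $|f'(y)|(1-\Phi(t,y))/t$, which follows from $1-\Phi(t,y)\le \|\kappa\|_\infty t/y$ (via $1-e^{-z}\le z$, exactly the estimate the paper uses near $x=0$) combined with $|f'(y)|\le ay$ and the compact support of $f'$. Second, the paper's definition of a Feller semigroup requires the generator to be conservative, which you do not address; the paper disposes of it in one line by noting that $(f\equiv 1, f\equiv 0)$ lies in the bounded-pointwise closure of the graph of $A$, and you should add the analogous remark.
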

\begin{proof}
For each $t\geq 0$ and  $f\in \hat C([0,\infty))$, $T(t)f$ is continuous by
  bounded convergence. Since for each $\epsilon>0$ there exists $N$ large
  enough for $x\geq N$ to imply $|f(x)|<\epsilon$, $x\geq N+t/\lambda$ implies
  $\max(x-t/\lambda,Z_t)\geq x-t/\lambda$ and therefore
  $|T(t)f(x)|<\epsilon$. Thus $T(t):\hat C([0,\infty))\to \hat C([0,\infty))$.
    Clearly $T(t)$ is also positive, linear, and contractive on $\hat
    C([0,\infty))$.  
      
For the semigriup property, let $Z_s$, $Z_t$, and $Z_{s+t}$ be independent
with distributions given by
\eqref{eq:ZtDist}. Then by Fubini's theorem and Lemma \ref{lem:ZtLinear},
\begin{equation}
\begin{split}
T(s)T(t)f(x)&=\expectation{T(t)f(\max(x-s/\lambda, Z_s))}\\
&=\expectation{f(\max(\max(x-t/\lambda, Z_t)-s/\lambda, Z_s))}\\
&=\expectation{f(\max(x-(s+t)/\lambda, Z_t-s/\lambda, Z_s))}\\
&=\expectation{f(\max(x-(s+t)/\lambda, Z_{s+t}))}\\
&=T(s+t)f(x)
\end{split}
\end{equation}
for all $f\in \hat C([0,\infty))$, $x\in [0,\infty)$, and $s,t\geq 0$.  Since $T(0)=I$, this implies $\{T(t)\}$ is a semigroup.

Next we show the semigroup is strongly continuous. 
Note that $f\in  \hat C([0,\infty))$ is uniformly continuous. Write
\begin{equation}
\begin{split}
\| T(t)f-f\| &= \sup_{x\in[0,\infty)}\Bigl|\expectation{f(\max(x-t/\lambda,
Z_t))-f(x)}\Bigr|\\
&\leq \sup_{x\in[0,\infty)}\probability{Z_t\leq x-t/\lambda}\left|f(x-t/\lambda)-f(x)\right|\\
  &\qquad +\sup_{x\in[0,\infty)}\int_{[x-t/\lambda]^+}^\infty\left|f( y)-f(x)\right|\Phi(t,dy).
\end{split}
\end{equation}

As $t\downarrow 0$ the first term goes to zero because it is bounded by
\[\sup_{x\in[t/\lambda,\infty)}\left|f(x-t/\lambda)-f(x)\right|\to 0\]
by uniform continuity of $f$.
For the second term let $\epsilon>0$ and $\eta>0$ such that $\sup_{x,y\in [0,\eta)}|f(x)-f(y)|<\epsilon$. Then
\begin{multline}
\sup_{x\in[0,\infty)}\int_{[x-t/\lambda]^+}^\infty\left|f(
y)-f(x)\right|\Phi(t,dy) \leq\sup_{x\in[\eta,\infty)}\probability{Z_t\ge\eta-t/\lambda}2\left\|f\right\|\\
 \qquad \vee
 \left(\sup_{x\in[0,\eta)}\sup_{y\in[\eta,\infty)}\probability{Z_t\geq\eta}\left|f(y)-f(x)\right|\right.
   \\
 \left. + \sup_{x,y\in[0,\eta)}\probability{Z_t<\eta}\left|f( y)-f(x)\right|\right).
\end{multline}
We have $\displaystyle
\sup_{x\in[\eta,\infty)}\probability{Z_t\ge\eta-t/\lambda}\left\|f\right\|\to
  0$ since $f$ is bounded, $Z_t\Rightarrow 0$, and $\eta>0$. Similarly,
\[
\sup_{x\in[0,\eta)}\sup_{y\in[\eta,\infty)}\probability{Z_t\geq\eta}\left|f(y)-f(x)\right|\leq
  \probability{Z_t\geq\eta}2\|f\| \to 0\]
  since $f$ is
    bounded and $Z_t\Rightarrow 0$. Lastly,
$\displaystyle \sup_{x,y\in[0,\eta)}\probability{Z_t<\eta}\left|f(
  y)-f(x)\right|<\epsilon$ and since $\epsilon$ is arbitrary, we conclude that
  $\| T(t)f-f\| \to 0$ as $t\downarrow 0$. Thus $\{T(t)\}$ is a strongly
  continuous semigroup.  $A$ is conservative because $(f\equiv 1,f\equiv 0)$
  is in the bounded-pointwise closure of $A$.  

  It remains to show that an extension $A$ of $\hat{A}$ is the generator of
  $\{T(t)\}$ and that $D$ is a core for $A$. 
Note that by l'H\^opital's rule, for $w>0$,
\begin{equation*}
\begin{split}
\lim_{t\to 0}\frac{1-\Phi(t,w)}{t}&=\lim_{t\to 0}-\frac{\partial}{\partial t}\Phi(t,w)\\
&=\lim_{t\to 0}\frac{\kappa(w+t/\lambda)}{w+t/\lambda}\Phi(t,w)\\
&=\kappa(w)/w,
\end{split}
\end{equation*}
since $\kappa$ is continuous by Lemma \ref{lem:propOfKappa}.
Now let $f\in D$. Then
\begin{equation}
\begin{split}
\frac{1}{t}\left(T(t)f(x)-f(x)\right)&=\expectation{\frac{1}{t}\left(f(\max(x-t/\lambda, Z_t))-f(x)\right)}\\
&=\probability{Z_t\leq x-t/\lambda}\left(\frac{f([x-t/\lambda]^+)-f(x)}{t}\right)\\
&\qquad+\frac{1}{t}\int_{[x-t/\lambda]^+}^\infty \left(f(y)-f(x)\right)\,\Phi(t,dy).\\
\end{split}
\end{equation}
As $t\to 0$, the first term converges to $-f'(x)/\lambda$  if $x>0$ and zero if $x=0$ since
$f$ is differentiable and $Z_t\Rightarrow 0$.

For the second term we note the integral is finite since $f$ is bounded and $\Phi(t,dy)$ is a probability measure, so Fubini's theorem yields
\begin{equation}
\begin{split}
\frac{1}{t}\int_{[x-t/\lambda]^+}^\infty &(f(y)-f(x))\,\Phi(t,dy)\\
&=\frac{1}{t}\int_{[x-t/\lambda]^+}^\infty\int_{x}^y f'(w)\,dw\,\Phi(t,dy)\\
&=\frac{1}{t}\int_{[x-t/\lambda]^+}^x\int_{[x-t/\lambda]^+}^w -f'(w)\,\Phi(t,dy)dw\\
&\qquad +\frac{1}{t}\int_{x}^\infty\int_{w}^\infty f'(w)\,\Phi(t,dy)dw\\
&=\frac{1}{t}\int_{[x-t/\lambda]^+}^x -f'(w)\left(\Phi(t,w)-\Phi(t,[x-t/\lambda]^+)\right)dw\\
&\qquad  +\int_{x}^\infty f'(w)\left(\frac{1-\Phi(t,w)}{t}\right)dw.
\end{split}
\end{equation}
The first integral converges to zero because $f'$ is bounded and $\Phi$ is
continuous; the region shrinks to zero at a rate proportional to $t$.  For
$x>0$, the second integral converges to $\int_{x}^\infty f'(w)\kappa(w)/w\,dw$
because $\left(\frac{1-\Phi(t,w)}{t}\right)\to \kappa(w)/w$ uniformly since
$\Phi(t,\cdot)$ is increasing and continuous for each $t$.  If $x = 0$ then
for any $\delta>0$ we
split the integral into the part over $[\delta,\infty]$, on which we may use
the above uniform convergence, and must then show that 
\begin{equation}\label{e.zeroint}
  \int_0^\delta f'(w)\frac{1-\Phi(t,w)}{t}dw\to 0,\qquad \text{ as
  }\delta\to0,
\end{equation}
uniformly in $t$. To that end, we use the bound $1-\exp\{-\lambda z\}\le
\lambda z$ to write  
\begin{align*}
  1-\Phi(t,w)& = 1-\exp\left( -\lambda\int_w^{w+t/\lambda}\frac{\kappa(y)}{y}dy
  \right)\\
  & \le \lambda\frac{\|\kappa\|_\infty t}{\lambda w}. 
\end{align*}
Combining with the bound  $|f'(w)|\le aw$, the integral in \eqref{e.zeroint}
is bounded above by $\delta a\|\kappa\|_\infty$, achieving the desired result.  

We have shown the generator of $\{T(t)\}$ extends $\hat A$. To show $D$ is a
core, note first that for all $t\geq0$ and $x>0$,
\begin{equation}
\frac{\partial}{\partial x} \Phi(t,x)=-\lambda\left(\frac{\kappa(x+t/\lambda)}{x+t/\lambda}-\frac{\kappa(x)}{x}\right) \Phi(t,x),
\end{equation}
so $Z_t$ is a continuous random variable.
Let $f\in D$.  Clearly $T(t)f$ has compact support since $x-t/\lambda\vee Z_t\geq x-t/\lambda$.  
Since 
\[\left| \frac{f(\max(x+h-t/\lambda,y))-f(\max(x-t/\lambda,y))}{h}\right|\leq
  \| f' \|_\infty,\]
  and 
\begin{multline*}
\frac{f(\max(x+h-t/\lambda,y))-f(\max(x-t/\lambda,y))}{h}\\
\to \left\{\begin{array}{ll}f'(x-t/\lambda), & \text{ if } y \leq
  x-t/\lambda,\\ 0, & \text{ if } y > x-t/\lambda,\end{array}\right.
\end{multline*} 
as $h\to 0$, and since $\Phi(t,\cdot)$ is a continuous distribution function,
the dominated convergence theorem
gives $\frac{\partial}{\partial x} T(t)f(x)$ exists and is continuous.  In particular,
$\frac{\partial}{\partial x} T(t)f(x)\leq \int_{0}^{[x-t/\lambda]^+}
\|f'\|_\infty
\|\frac{\partial}{\partial x} \Phi(t,y)\|_\infty \,dy$, and so the derivative of
$T(t)f$ is bounded by a linear function.   Thus, $T(t)f\in D$ and
\cite{ethier2009markov} Proposition 1.3.3 implies that $D$ is a core for $A$.
\end{proof}

We are ready to prove Theorem \ref{thm:Markov}. Recall that for each $n\geq 1$, 
$\{Y_n(k),k=0,1,2,\ldots\}$ is the Markov chain in $[0,\infty)$ with transition
  function $\mu_n(x,\Gamma)=\probability{\max(x-I^{(n)}/n,M^{(n)}/n)\in
\Gamma}$ where $I^{(n)}$ is an exponential random variable with parameter
$\lambda^{(n)}$ independent of the random variable $M^{(n)}$, which itself is the
largest job in a busy period with service times equal in distribution to $V$
and interarrival times equal in distribution to $I^{(n)}$. Recall that
$Y_n(k)$ is equal in distribution to $\frac{1}{n}R_k^{(n)}$, and that we
define $X_n(t)=Y_n([nt])$. 

Let $T_n f(x)=\int f(y) \mu_n(x,dy)$ and let $A_n=n(T_n-I)$. The proof will
use the following formula for iterates of $T_n$.

\begin{lem}\label{l.TnRepn}
For all $n\ge 1$, $t\ge0$ and $x\ge0$,
\begin{equation*}
T_n^{[nt]}f(x)=\expectation{f\left(\max\left(x-\frac{1}{n}\sum_{j=1}^{[nt]}
I_j^{(n)},\frac{1}{n}\max_{k=1}^{[nt]}\left(M_k^{(n)}-\sum_{j=k}^{[nt]-1}
I_j^{(n)}\right)\right)\right)}
\end{equation*}
\end{lem}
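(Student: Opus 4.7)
The approach is to represent $T_n^{[nt]}f(x)$ as the expectation of $f$ evaluated at the Markov chain with kernel $\mu_n$ started at $x$ after $[nt]$ steps, and then to unfold the one-step recursion into a single closed-form expression.

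First I would fix a convenient pathwise realisation. Let $\{I_k^{(n)}\}_{k\ge 1}$ and $\{M_k^{(n)}\}_{k\ge 1}$ be two independent iid sequences with the marginals prescribed in the definition of $\mu_n$. Define $Y_n^x(0)=x$ and
\[
Y_n^x(k+1)=\max\!\left(Y_n^x(k)-\tfrac{1}{n}I_{k+1}^{(n)},\;\tfrac{1}{n}M_{k+1}^{(n)}\right),\qquad k\ge 0.
\]
Since each one-step update has conditional law $\mu_n(Y_n^x(k),\cdot)$, this is a realisation of the chain with kernel $\mu_n$ started at $x$, so by the Markov property $T_n^m f(x)=\expectation{f(Y_n^x(m))}$ for every $m\ge 0$.

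Next I would prove by induction on $m$ that
\[
Y_n^x(m)=\max\!\left(x-\tfrac{1}{n}\sum_{j=1}^{m}I_j^{(n)},\;\tfrac{1}{n}\max_{k=1}^{m}\Bigl(M_k^{(n)}-\!\!\sum_{j=k+1}^{m}\!I_j^{(n)}\Bigr)\right).
\]
The base case $m=1$ reduces to the defining recursion once one adopts the convention that an empty sum vanishes. For the inductive step, the identities $\max(a,b)-c=\max(a-c,b-c)$ and the associativity of $\max$ let one subtract $I_{m+1}^{(n)}/n$ from every argument of the outer maximum at level $m$ and then adjoin $M_{m+1}^{(n)}/n$; the new term is absorbed into the inner maximum as the $k=m+1$ entry, because its accompanying sum $\sum_{j=m+2}^{m+1}I_j^{(n)}$ is empty.

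Finally, setting $m=[nt]$ produces a representation identical to the lemma's apart from the range of the inner summation: I obtain $\sum_{j=k+1}^{[nt]}I_j^{(n)}$, whereas the lemma writes $\sum_{j=k}^{[nt]-1}I_j^{(n)}$. Both sums contain exactly $[nt]-k$ terms, and because $\{I_j^{(n)}\}_{j\ge 1}$ is iid and independent of $\{M_k^{(n)}\}_{k\ge 1}$, the random vectors $\bigl(\sum_{j=k+1}^{[nt]}I_j^{(n)}\bigr)_{k=1}^{[nt]}$ and $\bigl(\sum_{j=k}^{[nt]-1}I_j^{(n)}\bigr)_{k=1}^{[nt]}$ share the same joint law and are in each case independent of the $M_k^{(n)}$. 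The two expressions inside the expectation are therefore equal in distribution, and their expectations under $f$ coincide. The only real obstacle is the bookkeeping: keeping the indexing straight through the induction and ensuring that the final shift is justified as an equality of joint laws rather than merely of marginals.
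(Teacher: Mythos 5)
Your proof is correct and follows essentially the same route as the paper's: both unfold the one-step $\max$ recursion into the closed-form nested maximum (the paper computes the two-fold iterated integral explicitly and appeals to induction, while you set up the pathwise chain and carry out the induction), and both handle the reversal of the idle-period indices by an exchangeability/relabeling argument using that the $\{I_j^{(n)}\}$ are i.i.d.\ and independent of the $\{M_k^{(n)}\}$. Your explicit justification of the index shift as an equality of joint laws is a point the paper's proof passes over silently, so no gap here.
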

\begin{proof}
If $[nt] = 2$ then 
\begin{equation*}
\begin{split}
T_n^{[nt]}f(x)&=T_n(T_n f)(x) = \int T_n f(y) \, \mu_n(x,dy)\\
&= \int \int f(z)\,\mu_n(y,dz) \, \mu_n(x,dy)\\
&= \expectation{\int f(z)\, \mu_n(\max(x-I_2^{(n)}/n,M_2^{(n)}/n),dz)}\\
&= \expectation{\expectation{
  f\left(\max\left(\max\left(x-I_2^{(n)}/n,M_2^{(n)}/n\right)-I_1^{(n)}/n,M_1^{(n)}\right)\right)}}\\
&= \expectation{
  f\left(\max\left(\max\left(x-I_2^{(n)}/n,M_2^{(n)}/n\right)-I_1^{(n)}/n,M_1^{(n)}\right)\right)},\\
\end{split}
\end{equation*}
where the iterated integral becomes the expectation of independent random
variables $I_1^{(n)},I_2^{(n)},M_1^{(n)},M_2^{(n)}$. Then
\begin{equation*}
\begin{split}
T_n^{[nt]}f(x) &= \expectation{
  f\left(\max\left(x-I_1^{(n)}/n-I_2^{(n)}/n,M_1^{(n)}/n-I_1^{(n)}/n,M_2^{(n)}/n\right)\right)}\\
&=\expectation{f\left(\max\left(x-\frac{1}{n}\sum_{j=1}^{2}
I_j^{(n)},\frac{1}{n}\max_{k=1}^{2}\left(M_k^{(n)}-\sum_{j=k}^{1}
I_j^{(n)}\right)\right)\right)}.
\end{split}
\end{equation*}
The general case follows by induction.
\end{proof}

{\it Proof of Theorem \ref{thm:Markov}}
Note that $T_n:\hat{C}([0,\infty))\to\hat{C}([0,\infty))$. So by
  \cite{ethier2009markov} Theorem 4.2.6 and Lemma \ref{lem:TtSemigroup} it
  suffices to show for each $f\in \hat C([0,\infty))$,
    $T_n^{[nt]}f$ converges uniformly to $T(t)f$ as $n\to\infty$, for each $t\geq0$.  To show this we show
    $A_nf\to Af$ for each $f\in D$ which by \cite{ethier2009markov} Theorem
    1.6.5 gives $T_n^{[nt]} f\to T(t)f$ uniformly on compact sets for each
    $f\in \hat C([0,\infty))$. Then to upgrade to uniform convergence we
      use the fact that $T_n^{[nt]}f$ (and $T(t)f$) are uniformly small for
      large $x$; in particular by Lemma \ref{l.TnRepn},
\begin{equation*}
\begin{split}
T_n^{[nt]}f(x)&=\expectation{f\left(\max\left(x-\frac{1}{n}\sum_{j=1}^{[nt]}
I_j^{(n)},\frac{1}{n}\max_{k=1}^{[nt]}\left(M_k^{(n)}-\sum_{j=k}^{[nt]-1}
I_j^{(n)}\right)\right)\right)}\\
&=\expectation{f\left(\max\left(x-\frac{1}{n}\sum_{j=1}^{[nt]}
I_j^{(n)},\frac{1}{n}R_{[nt]}^{(n)}\right)\right)}\\
&\leq \sup_{z\in [ [x-y]^+,\infty)}|f(z)| +\| f\|_\infty \probability{\frac{1}{n}\sum_{j=1}^{[nt]} I_j^{(n)} >y},
\end{split}
\end{equation*}
where the last inequality is true for any $y>0$. By setting $y>\lambda t$, the
limitting mean of the idle periods, and then choosing $x$ sufficiently
larger than $y$, both of the above terms can be made uniformly small in $n$
and $x$.

To show $A_nf\to Af$, fix $f\in D$ so the derivative of $f$ is bounded and let $a>0$ be such that $|f'(x)|\leq ax$. Write
\begin{equation}\label{eq:Anf}
\begin{split}
A_nf(x)&=\expectation{n(f(\max(x-I^{(n)}/n,M^{(n)}/n))-f(x))}\\
&=\int_{y=0}^\infty\int_{z=0}^\infty n(f(\max(x-y/n,z/n))-f(x))\lambda^{(n)}e^{-\lambda^{(n)}y} m^{(n)}(dz)\,dy\\
&=\int_{y=0}^\infty\int_{z=0}^{[nx-y]^+} n(f([x-y/n]^+)-f(x))\lambda^{(n)}e^{-\lambda^{(n)}y} m^{(n)}(dz)\,dy\\
&\qquad +\int_{y=0}^\infty\int_{z=[nx-y]^+}^{\infty} n(f(z/n)-f(x))\lambda^{(n)}e^{-\lambda^{(n)}y} m^{(n)}(dz)\,dy\\
&=\int_{y=0}^\infty m^{(n)}([nx-y]^+) n(f([x-y/n]^+)-f(x))\lambda^{(n)}e^{-\lambda^{(n)}y}\,dy\\
&\qquad +\int_{y=0}^\infty\int_{u=[x-y/n]^+}^{\infty} (f(u)-f(x))nm^{(n)}(ndu)\lambda^{(n)}e^{-\lambda^{(n)}y} \,dy.
\end{split}
\end{equation}
In the case $x=0$ the first integral is zero.
For $x>0$ we have $m^{(n)}([nx-y]^+)\uparrow 1$ for each $y$ as $n\to \infty$ since
$m^{(n)}\leq m^{(n+1)}$ by Lemma $\ref{lem:IncreasingM}$ and $m^{(\infty)}$ is
proper. Also as $n\to\infty$,
\begin{equation}
  n(f([x-y/n]^+)-f(x)))=-n\int_{z=[x-y/n]^+}^{x}f'(z)\,dz\to -yf'(x),
  \qquad y\ge 0,
\end{equation}
and $|n(f([x-y/n]^+)-f(x)))|\leq |y| \|f'\|_\infty$.  So, 
\[|m^{(n)}([nx-y]^+) n(f([x-y/n]^+)-f(x)))\lambda^{(n)}e^{-\lambda^{(n)}y}|\leq  |y|
  \|f'\|_\infty\lambda e^{-\lambda^{(1)} y}\]
 with $\lambda^{(n)}\uparrow\lambda$ positive and finite. Thus dominated convergence gives
\begin{equation}
\int_{y=0}^\infty m^{(n)}([nx-y]^+) n(f([x-y/n]^+)-f(x)))\lambda^{(n)}e^{-\lambda^{(n)}y}\,dy\to -\frac{f'(x)}{\lambda}
\end{equation}
for $x\geq0$, since $f'(0)=0$.

Now we may write the last line of $\eqref{eq:Anf}$ as
\begin{equation}\label{eq:smallBig}
\begin{split}
&\int_{y=0}^\infty\int_{u=[x-y/n]^+}^{x} (f(u)-f(x))nm^{(n)}(ndu)\lambda^{(n)}e^{-\lambda^{(n)}y} \,dy\\
&+\int_{y=0}^\infty\int_{u=x}^{\infty} (f(u)-f(x))nm^{(n)}(ndu)\lambda^{(n)}e^{-\lambda^{(n)}y} \,dy.
\end{split}
\end{equation}
If $x=0$ the first integral is zero. If $x>0$ then it goes to zero since
\begin{equation*}
\begin{split}
&\int_{u=[x-y/n]^+}^{x} (f(u)-f(x))nm^{(n)}(ndu)\\
&\leq y\| f'\| \left(m^{(n)}(nx) - m^{(n)}(n[x-y/n]^+)\right)\to 0
\end{split}
\end{equation*}
for each $y$ since $\|n(f(u)-f(x))\|\leq y\|f'\|$ for $u\in[{[x-y/n]^+},{x}]$.
The integral $\int_{y=0}^\infty\int_{u=[x-y/n]^+}^{x} (f(u)-f(x))nm^{(n)}(ndu)\lambda^{(n)}e^{-\lambda^{(n)}y} \,dy\to 0$ since the inner integral is bounded by $y\|f'\|$.
We may evaluate the outer integral of the second term in \eqref{eq:smallBig} immediately:
\begin{equation*}
\begin{split}
&\int_{y=0}^\infty\int_{u=x}^{\infty} (f(u)-f(x))nm^{(n)}(ndu)\lambda^{(n)}e^{-\lambda^{(n)}y} \,dy\\
&=\int_{u=x}^{\infty} (f(u)-f(x))nm^{(n)}(ndu)\\
&=\int_{u=x}^\infty \int_{z=x}^u f'(z)\,dz\, nm^{(n)}(ndu)\\
&=\int_{z=x}^\infty \int_{u=z}^\infty f'(z) nm^{(n)}(ndu)\,dz\\
&=\int_{z=x}^\infty f'(z) n\bar m^{(n)}(nz)\,dz.
\end{split}
\end{equation*}
Letting $K$ be the support of $f'(z)$, the integrand 
\[|f'(z) n\bar m^{(n)}(nz) |\leq azn\bar m^{(n)}(nz) 1_K(z)\to
  a\kappa(z)1_K(z)\]
  uniformly  because $n \bar m^{(n)}(nz)\to \kappa(z)/z$ uniformly by
  monotonicity of $\bar m^{(n)}(nz)$ and because $z\in K$ is bounded.  So
  bounded convergence gives 
  \[\int_{z=x}^\infty f'(z) n\bar m^{(n)}(nz)\,dz\to
    \int_{z=x}^\infty f'(z) \frac{\kappa(z)}{z}\,dz.\]

\hfill$\qed$

\bibliographystyle{acm}
\bibliography{GTZ}

\vspace{2ex}
\begin{minipage}[t]{2in}
\footnotesize {\sc Department of Mathematics\\
University of Virginia\\
Charlottesville, VA 22904\\
E-mail: gromoll@virginia.edu\\
E-mail: bat5ct@virginia.edu}
\end{minipage}
\hspace{5ex}
\begin{minipage}[t]{4in}
\footnotesize {\sc Centrum Wiskunde \& Informatica\\
P.O. Box 94079\\
1090 GB Amsterdam, Netherlands\\
E-mail: Bert.Zwart@cwi.nl}

\end{minipage}

\end{document}